\theoremstyle{problems}
\newtheorem{theorem}{Theorem}[section]
\newtheorem{proposition}[theorem]{Proposition}
\newtheorem{lemma}[theorem]{Lemma}
\newtheorem{corollary}[theorem]{Corollary}
\theoremstyle{definition}
\theoremstyle{remark}
\newtheorem*{remark}{remark}
\numberwithin{equation}{section}
\renewcommand{\phi}{\varphi}
\newcommand{\bl}{\mbox{$\lambda\kern-0.53em\lambda$}}% bold lambda
\newcommand{\bmu}{\mbox{$\mu\kern-0.55em\mu$}}% bold mu
\newcommand{\bnu}{\mbox{$\nu\kern-0.51em\nu$}}% bold nu
\def\bphi{\mbox{$\varphi\kern-0.59em\varphi$}}% bold phi
\def\R{\mathbb R}
\def\Z{\mathbb Z}
\def\sK{\mathcal K}
\newcommand{\mb}[1]{{\textbf {\textit#1}}}
\renewcommand{\ge}{\geqslant}
\renewcommand{\leq}{\leqslant}
\def\лк{\symbol{"BE}}
\def\пк{\symbol{"BF}}
\newcommand{\rank}{\mathop{\mathrm{rank}}}
\def\raag{\mbox{\it RA\/}}
\def\racg{\mbox{\it RC\/}}
\newcommand{\rk}{\mathcal R_{\mathcal K}}
\def\pt{\mathit{pt}}
\begin{document}

%\selectlanguage{english}

\title[Graded components of $L(\racg_\sK)$]{Graded components of the Lie algebra associated with the lower central series of a right-angled Coxeter group}
\author{Y.~Veryovkin}

%\address{Московский Государственный Университет им. М.\,В. Ломоносова,
%механико-математический факультет}
\address{HSE, Faculty of Computer Science, International Laboratory of Algebraic Topology and its Applications, Lomonosov MSU,
Faculty of Mechanics and Mathematics}
\email{verevkin\_j.a@mail.ru}

\thanks{The study was supported by the Russian Science Foundation grant No. 21-71-00049, https://rscf.ru/project/21-71-00049/}

\begin{abstract}
The lower central series of the rgiht-angled Coxeter group $\racg_\sK$ and the corresponding graded Lie algebra $L(\racg_\sK)$ associated with the lower central series of a right-angled Coxeter group are studied. Relations are obtained in the graded components of the Lie algebra $L(\racg_\sK)$. A basis of the fourth graded component $L(\racg_\sK)$ for groups with at most $4$ generators was described.
\end{abstract}

\maketitle

\section{Introduction}
The right-angled Coxeter group $\racg_\sK$ is a group with $m$ generating $g_1,\ldots,g_m$ relations $g_i^2=1$ for all $i \in \{1,\ldots,m\}$, as well as the relations $g_ig_j=g_jg_i$ for some pairs $\{i,j\}$. Such a group can be defined by a graph $\sK^1$ with $m$ vertices, where if the corresponding generators commute, then a pair of vertices is connected by an edge. Also of interest is the lower central series of the group, with the help of which the Lie algebra associated with group is constructed. This paper is devoted to the description of the Lie algebra associated with the right-angled Coxeter group (associated Lie algebra).

For the right-angled Artin groups $\raag_\sK$ (which differ from the right-angled Coxeter groups $\racg_\sK$ by the absence of the relations $g_i^2 = 1$), the associated Lie algebra $L(\raag_\sK)$ was completely calculated in~ \cite{Duch-Krob}, see also~\cite{WaDe},~\cite{Papa-Suci}. Namely, an isomorphism between the Lie algebra $L(\raag_\sK)$ and the Lie graph algebra (over $\mathbb Z$) corresponding to the graph $\sK^1$ was proved.

In the case of right-angled Coxeter groups, in some special cases, factor groups $\gamma_1(\racg_\sK) / \gamma_n(\racg_\sK)$ were studied for some $n$ (see~works~\cite{Struik1}, ~\cite{Struik2}). For right-angled Coxeter groups, in contrast to right-angled Artin groups, the problem of describing the associated Lie algebra $L(\racg_\sK)$ is much more difficult, since there is no isomorphism between the algebra $L(\racg_\sK)$ and the Lie graph algebra $L_ \sK$ over $\mathbb Z_2$ (see~\cite[Example 4.3]{ve3}). In this paper~\cite{ve3} an epimorphism of Lie algebras $L_\sK \rightarrow L(\racg_\sK)$ is constructed and in some cases its kernel is described, and for an arbitrary group $\racg_\sK$ a combinatorial description of the bases of the first three graded components of the Lie algebra.

In this paper~\cite{WaldingerLCS} the dimensions of the successive factors of the members of the lower central series are calculated and their basis is constructed for the free product of direct sums of cyclic groups of order $2$, which is a subset of right-angled Coxeter groups. We construct a basis for the $4$-th graded component of the associated Lie algebra for Coxeter groups with $3$ and $4$ generators (the associated Lie algebra is completely described for $2$ generators, see~\cite[Proposition 4.4]{ve3}). Unlike the bases constructed in~\cite{WaldingerLCS}, the bases constructed in this paper consist entirely of simple nested commutators.

The author is the winner of the ``Young Mathematics of Russia'' competition in 2019. The author expresses his deep gratitude to his supervisor Taras Evgenievich Panov for posing the problem, constant attention and assistance in the work.
%и присоединённой алгебры Ли прямоугольной группы Кокстера.

%При рассмотрении последовательных факторов больших размерностей (больше трёх), растёт скорость роста количества образующих, что усложняет выбор минимального набора из них уже начиная с $L^4(\racg_\sK)$. Также на данный момент не совсем ясен подход для доказательства минимальности для любого $n$. Таким образом, рассматриваемая задача сложна даже для уже рассмотренных~$n$.

\section{Preliminaries}
Let $\sK$ be an (abstract) simplicial complex on the set $ [m] = \{1,2, \dots, m \}$. 
A subset $I=\{i_1,\ldots,i_k\}\in\mathcal K$ is called \emph{a simplex} (or \emph{face}) of~$\sK$. We always assume that $\sK$ contains $\varnothing$ and all singletons $\{i\}$, $i = 1, \ldots, m$.

We denote by $F_m$ or $F(g_1, \ldots, g_m)$ a free group of rank $m$ with generators $g_1, \ldots, g_m$.

The \emph{right-angled Coxeter (Artin) group} $\racg_\sK$ ($\raag_\sK$) corresponding to~$\sK$ is defined by generators and relations as follows:
\[
  \racg_\sK = F(g_1,\ldots,g_m)\big/ (g_i^2 = 1 \text{ for } i \in \{1, \ldots, m\}, \; \; g_ig_j=g_jg_i\text{ when
  }\{i,j\}\in\sK),
\]
\[
  \raag_\sK = F(g_1,\ldots,g_m)\big/ (g_ig_j=g_jg_i\text{ when
  }\{i,j\}\in\sK).
\]

Clearly, the group $\racg_\sK$ ($\raag_\sK$) depends only on the $1$-skeleton of~$\sK$, the graph~$\sK^1$.

We recall the construction of polyhedral products.

Let $\sK$ be a simplicial complex on~$[m]$, and let
\[
  (\mb X,\mb A)=\{(X_1,A_1),\ldots,(X_m,A_m)\}
\]
be a sequence of $m$ pairs of pointed topological spaces, $\pt\in A_i\subset X_i$, where $\pt$ denotes the basepoint. For each subset $I\subset[m]$ we set
\begin{equation}\label{XAI}
  (\mb X,\mb A)^I=\bigl\{(x_1,\ldots,x_m)\in
  \prod_{k=1}^m X_k\colon\; x_k\in A_k\quad\text{for }k\notin I\bigl\}
\end{equation}
and define the \emph{polyhedral product} $(\mb X,\mb
A)^\sK$ as
\[
  (\mb X,\mb A)^{\sK}=\bigcup_{I\in\mathcal K}(\mb X,\mb A)^I=
  \bigcup_{I\in\mathcal K}
  \Bigl(\prod_{i\in I}X_i\times\prod_{i\notin I}A_i\Bigl),
\]
where the union is taken inside the Cartesian product $\prod_{k=1}^m X_k$.

In the case when all pairs $(X_i,A_i)$ are the same, i.\,e.
$X_i=X$ and $A_i=A$ for $i=1,\ldots,m$, we use the notation
$(X,A)^\sK$ for $(\mb X,\mb A)^\sK$. Also, if each $A_i=\pt$, then
we use the abbreviated notation $\mb X^\sK$ for $(\mb X,\pt)^\sK$,
and $X^\sK$ for $(X,\pt)^\sK$.

For details on this construction and examples see~\cite[\S3.5]{bu-pa00},~\cite{b-b-c-g10},~\cite[\S4.3]{bu-pa15}.

Let $(X_i,A_i)=(D^1,S^0)$ for $i = 1, \ldots, m$, where $D^1$ is a segment and $S^0$ is its boundary consisting of two points. The corresponding polyhedral product is known as the \emph{real moment-angle complex}~\cite[\S3.5]{bu-pa00},~\cite{bu-pa15} and is
denoted by~$\rk$:
\begin{equation}\label{rk}
  \rk=(D^1,S^0)^\sK=\bigcup_{I\in\sK}(D^1,S^0)^I.
\end{equation}

We shall also need the polyhedral product $(\R P^\infty)^\sK$, where $\R P^\infty$ the infinite-dimensional real projective space.

A simplicial complex $\sK$ is called a \emph{flag complex} if any set of vertices of $\sK$ which are pairwise connected
by edges spans a simplex. Any flag complex $\sK$ is determined by its one-dimensional skeleton $\sK^1$.

The relationship between polyhedral products and right-angled Coxeter groups is described by the following result.

\begin{theorem}[{see \cite[Corollary~3.4]{pa-ve}}]\label{coxfund}
Let $\sK$ be a simplicial complex on $m$ vertices.
\begin{itemize}
\item[(a)] $\pi_1((\R P^\infty)^\sK)\cong\racg_\sK$.
\item[(b)] Each of the spaces $(\R P^\infty)^\sK$ и $\rk$ is aspherical if and only if $\sK$ is a flag complex.
\item[(c)] $\pi_i((\R P^\infty)^\sK)\cong\pi_i(\rk)$ for $i\ge2$.
\item[(d)] The group $\pi_1(\rk)$ isomorphic to the commutator subgroup~$\racg'_\sK$.
\end{itemize}
\end{theorem}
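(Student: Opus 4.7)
The plan is to deduce all four assertions from a single structural observation: $\rk$ is (up to homotopy) a regular $(\Z/2)^m$-cover of $(\R P^\infty)^\sK$, so there is a fibration sequence
\[
\rk \lllra (\R P^\infty)^\sK \lllra B(\Z/2)^m = (\R P^\infty)^m.
\]
To set this up, note that the projection $(S^\infty)^m \to (\R P^\infty)^m$ is a principal $(\Z/2)^m$-bundle, and pulling it back along the inclusion $(\R P^\infty)^\sK \hookrightarrow (\R P^\infty)^m$ gives a cover whose total space is the polyhedral product $(S^\infty, S^0)^\sK$. The inclusion of pairs $(D^1, S^0) \hookrightarrow (S^\infty, S^0)$ is a homotopy equivalence of pairs, so $\rk = (D^1, S^0)^\sK \simeq (S^\infty, S^0)^\sK$ and the covering is realised.

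For part (a), I would argue directly from the product CW structure that $(\R P^\infty)^\sK$ inherits from $(\R P^\infty)^m$: the 1-skeleton is $\vee^m S^1$, giving generators $g_1, \ldots, g_m$; each vertex $\{i\}$ of $\sK$ contributes a 2-cell of $\R P^2$ attached along $g_i^2$; each edge $\{i,j\} \in \sK$ contributes a square 2-cell attached along $[g_i, g_j]$; and higher-dimensional cells impose no new $\pi_1$-relations. Van Kampen then reads off the defining presentation of $\racg_\sK$.

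Parts (c) and (d) follow from the long exact homotopy sequence of the fibration above. Since $\pi_i((\R P^\infty)^m) = 0$ for $i \ge 2$, one obtains $\pi_i(\rk) \cong \pi_i((\R P^\infty)^\sK)$ immediately, proving (c). At $i = 1$, the $\pi_1$ of a covering space injects, yielding a short exact sequence
\[
1 \lllra \pi_1(\rk) \lllra \racg_\sK \lllra (\Z/2)^m \lllra 1;
\]
the middle map sends $g_i$ to the $i$-th standard generator of $(\Z/2)^m$ and hence coincides with the abelianisation, so its kernel is $\racg'_\sK$, which is (d).

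The substantive point, and the main obstacle, is part (b). By (c) the asphericity of $\rk$ is equivalent to that of $(\R P^\infty)^\sK$, so it suffices to recognise when these spaces are $K(\pi,1)$'s. I would invoke the cubical model: $\rk$ is a cubical subcomplex of $[-1,1]^m$ in which the link at every vertex is isomorphic to $\sK$, so by Gromov's link criterion the universal cover of $\rk$ is a locally CAT(0) cube complex exactly when $\sK$ is flag, and then Cartan--Hadamard forces it to be contractible. Conversely, a missing face of a non-flag $\sK$ produces an explicit non-zero spherical class, obstructing asphericity. The verification of the link condition, rather than the homotopy-theoretic bookkeeping of Steps 1--3, is where the real work lies.
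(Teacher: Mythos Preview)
The paper does not prove this theorem; it is quoted from \cite[Corollary~3.4]{pa-ve} as background and no argument is given. So there is nothing in the paper to compare your proposal against. Your outline is correct and is essentially the standard proof (and the one in the cited source): the identification $\rk\simeq(S^\infty,S^0)^\sK$ exhibits $\rk$ as the $(\Z/2)^m$-cover of $(\R P^\infty)^\sK$, after which (c) and~(d) drop out of the long exact sequence; (a) is read off by van~Kampen from the product CW structure; and~(b) is Gromov's link condition applied to the cubical complex $\rk\subset[-1,1]^m$, whose vertex links are all copies of~$\sK$.

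Two small points of precision. In~(b) it is $\rk$ itself, not its universal cover, that one checks is locally CAT(0); this is a local condition, and the passage to the universal cover (globally CAT(0), hence contractible) is the Cartan--Hadamard step. For the converse you sketch, the clean way to produce the spherical class is via the full-subcomplex retract: if $J$ is a minimal missing face witnessing the failure of flagness (so $|J|\ge3$), then $\sK_J=\partial\Delta^{|J|-1}$ and $\mathcal R_{\sK_J}\cong\partial[-1,1]^{|J|}\cong S^{\,|J|-1}$ is a retract of~$\rk$, giving a nonzero class in $\pi_{|J|-1}(\rk)$. Neither point is a genuine gap; your outline completes to a full proof without difficulty.
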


For each subset~$J\subset[m]$, consider the restriction of $\sK$ to~$J$:
\[
  \sK_J=\{I\in\sK\colon I\subset J\},
\]
which is also known as a \emph{full subcomplex} of~$\sK$.

The following theorem gives a combinatorial description of homology of the real moment-angle complex $\mathcal R_\sK$:

\begin{theorem}[{\cite{bu-pa00}, \cite[\S4.5]{bu-pa15}}]\label{homrk} There is an isomorphism
\[
  H_k(\rk;\Z)\cong\bigoplus_{J\subset[m]}\widetilde
  H_{k-1}(\sK_J)
\]
for any $k \ge 0$, where $\widetilde H_{k-1}(\sK_J)$~is the reduced simplicial homology group of ~$\sK_J$.
\end{theorem}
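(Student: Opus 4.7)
The plan is to compute $H_*(\rk;\Z)$ directly from an explicit cellular chain complex and to exhibit a splitting of that complex indexed by subsets $J\subset[m]$. Equip each factor $D^1$ with the CW structure having two $0$-cells (the two points of $S^0$) and one $1$-cell $c$; then $(D^1)^m$ acquires a product CW structure, and from the definition of $\rk$ one sees that it is a CW subcomplex whose cells are naturally indexed by pairs $(I,\omega)$ with $I\in\sK$ and $\omega\colon[m]\setminus I\to\{0,1\}$, the cell $(I,\omega)$ being $|I|$-dimensional.

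Next I would change basis in $C_*(D^1;\Z)$ from $\{[0],[1],c\}$ to $\{d_0,d_1,c\}$, where $d_0=[0]$ and $d_1=[1]-[0]$, so that $\partial c=d_1$ and $\partial d_0=\partial d_1=0$. Tensoring over the $m$ factors produces a direct sum splitting
\[
C_*\bigl((D^1)^m;\Z\bigr)=\bigoplus_{J\subset[m]}C_*^{(J)},
\]
where $C_*^{(J)}$ is spanned by monomials using $c^{(i)}$ or $d_1^{(i)}$ precisely at positions $i\in J$ and $d_0^{(i)}$ elsewhere; the differential preserves $J$ since it acts only by $c^{(i)}\mapsto d_1^{(i)}$. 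Rewriting the cellular basis of $\rk$ in the new basis via $[0]=d_0$ and $[1]=d_0+d_1$, I would verify that $C_*(\rk)$ also respects this splitting, with $J$-summand freely generated by the monomials
\[
e_{I,J}=\prod_{i\in I}c^{(i)}\cdot\prod_{i\in J\setminus I}d_1^{(i)}\cdot\prod_{i\notin J}d_0^{(i)},\qquad I\in\sK_J,
\]
of degree $|I|$.

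Finally, I would identify this $J$-summand, shifted in degree by one, with the augmented simplicial chain complex $\widetilde C_*(\sK_J;\Z)$ of the full subcomplex: under the correspondence $e_{I,J}\leftrightarrow I$, the Koszul sign rule makes $\partial e_{I,J}$ equal to the simplicial boundary of the simplex $I\in\sK_J$ (the empty simplex $I=\varnothing$ contributing the generator of $\widetilde C_{-1}$). Taking homology term by term then yields the claimed isomorphism $H_k(\rk;\Z)\cong\bigoplus_J\widetilde H_{k-1}(\sK_J)$. The main obstacle is the verification of the two splitting assertions --- that $C_*(\rk)\subset C_*((D^1)^m)$ is indeed preserved by the $J$-decomposition, and that the resulting $J$-summand has exactly the basis $\{e_{I,J}\}_{I\in\sK_J}$ --- both of which reduce to combinatorial bookkeeping with the elementary change of basis $[1]=d_0+d_1$ applied factorwise.
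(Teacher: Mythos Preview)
The paper does not supply its own proof of this theorem: it is quoted in the preliminaries with citations to \cite{bu-pa00} and \cite[\S4.5]{bu-pa15}, and no argument is given. So there is nothing in the paper to compare your proposal against.

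That said, your outline is correct and is precisely the standard cellular argument one finds in the cited Buchstaber--Panov references. The only points worth tightening are the two ``bookkeeping'' steps you flag yourself. For the first, note that with $I$ fixed the change of basis from the product cells $(I,\omega)$ (indexed by $\omega\colon[m]\setminus I\to\{0,1\}$) to the elements $e_{I,J}$ (indexed by $J\supset I$) is unitriangular: writing $T=\omega^{-1}(1)$ one has $(I,\omega)=\sum_{S\subset T}e_{I,\,I\cup S}$, and M\"obius inversion recovers each $e_{I,J}$ as an integral combination of cells with the same~$I$. Hence $\{e_{I,J}:I\in\sK,\ I\subset J\}$ is a $\Z$-basis of $C_*(\rk)$, and since each $e_{I,J}$ lies in a single $C_*^{(J)}$, the splitting of $C_*(\rk)$ follows. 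For the second, the differential $\partial e_{I,J}=\sum_{i\in I}\pm e_{I\setminus\{i\},J}$ (only $c^{(i)}\mapsto d_1^{(i)}$ contributes) is exactly the augmented simplicial boundary of $I\in\sK_J$ up to the degree shift, with $e_{\varnothing,J}$ playing the role of the $(-1)$-simplex. With these two verifications made explicit, your argument is complete.
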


If $\sK$ is a flag complex, then Theorem~\ref{homrk} also gives a description of the integer homology groups of the commutator subgroup $\racg_\sK'$.

%{\bf ВВЕСТИ ОБОЗНАЧЕНИЕ $R_K$ И $RP^\infty$, ДАЛЬШЕ СЛЕДСТВИЕ 3.4 ИЗ МАТ. СБОРНИКА И ВСТАВИТЬ ТЕОРЕМУ 3.7, В ЧАСТНОСТИ $H_1(R_K) = RC_K''$ - СВОБОДНАЯ АБЕЛЕВАЯ ГРУППА РАНГА КАК ИЗ 4.6 ТЕОРЕМЫ}.

%и получается из $\sK^1$ заполнением всех полных подграфов симплексами.

%Линейно связное пространство $X$ называется \emph{асферическим},
%если $\pi_i(X)=0$ при $i\ge2$. Асферическое пространство $X$
%является пространством Эйленберга--Маклейна $K(\pi,1)$ с
%$\pi=\pi_1(X)$ или \emph{классифицирующим пространством}
%группы~$\pi$.
\medskip
Let $G$ be group. The \emph{commutator} of two elements $a, b \in G$ given by the formula $(a,b) = a^{-1}b^{-1}ab$.

We refer to the following nested commutator of length $k$
$$
(q_{i_1}, q_{i_2}, \ldots, q_{i_k}) := (\ldots((q_{i_1}, q_{i_2}), q_{i_3}), \ldots, q_{i_k}).
$$
as the \emph{simple nested commutator} of $q_{i_1}, q_{i_2}, \ldots, q_{i_k}$.

Similarly, we define \emph{simple nested Lie commutators}
$$
[\mu_{i_1}, \mu_{i_2}, \ldots, \mu_{i_k}] := [\ldots[[\mu_{i_1}, \mu_{i_2}], \mu_{i_3}], \ldots, \mu_{i_k}].
$$

For any group $G$ and any three elements $a, b, c \in G$, the following \emph{Hall--Witt identities} hold:
\begin{equation}\label{WH}
\begin{aligned}
&(a, bc) = (a, c) (a, b) (a, b, c),\\
&(ab, c) = (a, c) (a, c, b) (b, c),\\
&(a,b,c)(b,c,a)(c,a,b)=(b,a)(c,a)(c,b)^a(a,b)(a,c)^b(b,c)^a(a,c)(c,a)^b,
\end{aligned}
\end{equation}
where $a^b = b^{-1}ab$.

Let $H, W \subset G$ be subgroups. Then we define $(H, W) \subset G$ as the subgroup generated by all commutators $(h, w), h \in H, w \in W$. In particular, the \emph{commutator subgroup} $G'$ of the group $G$ is $(G, G)$.

For any group $G$, set $\gamma_1(G) = G$ and define inductively $\gamma_{k+1}(G) = (\gamma_{k}(G), G)$. The resulting sequence of groups $\gamma_1(G), \gamma_2(G), \ldots, \gamma_k(G), \ldots$ is called the \emph{lower central series} of $G$.

If $H \subset G$ is normal subgroup, i.\,e. $H = g^{-1}Hg$ for all $g \in G$, we will use the notation $H \lhd G$.

In particular, $\gamma_{k+1}(G) \lhd \gamma_k(G)$, and the quotient group $\gamma_{k}(G) / \gamma_{k+1}(G)$ is abelian. Denote $L^k (G) := \gamma_{k}(G) / \gamma_{k+1}(G)$ and consider the direct sum
$$
L(G) := \bigoplus_{k=1}^{+\infty} L^k (G).
$$
Given an element $a_k \in \gamma_k(G) \subset G$, we denote by $\overline{a}_k$ its conjugacy class in the quotient group~$L^k (G)$. If $a_k \in \gamma_k(G), \; a_l \in \gamma_l(G)$, then $(a_k, a_l) \in \gamma_{k+l}(G)$. Then the Hall--Witt identities imply that $L(G)$ is a graded Lie algebra over $\mathbb Z$ (a Lie ring) with Lie bracket $[\overline{a}_k, \overline{a}_l] := \overline{(a_k, a_l)}$. The Lie algebra $L(G)$ is called the \emph{Lie algebra associated with the lower central series} (or the \emph{associated Lie algebra}) of $G$.

\begin{theorem}[{\cite[Theorem 4.5]{pa-ve}}]\label{gscox}
Let $\racg_\sK$ be right-angled Coxeter group corresponding to a simplicial complex~$\sK$ with $m$ vertices.
Then the commutator subgroup $\racg'_\sK$ has a finite minimal set of generators consisting of $\sum_{J\subset[m]}\rank\widetilde H_0(\sK_J)$ nested commutators
\begin{equation}\label{commuset}
  (g_i,g_j),\quad (g_{i},g_j,g_{k_1}),\quad\ldots,\quad
  (g_{i},g_{j},g_{k_1},g_{k_2},\ldots,g_{k_{m-2}}),
\end{equation}
where $i < j > k_1 > k_2 > \ldots > k_{\ell-2}$, $k_s\ne i$ for all~$s$, and
$i$~is the smallest vertex in a connected component not containing~$j$ of the subcomplex $\sK_{\{k_1,\ldots,k_{\ell-2},j,i\}}$.
\end{theorem}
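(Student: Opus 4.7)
The plan is to establish two facts: (a) the indicated nested commutators generate $\racg'_\sK$, and (b) their number equals the minimal number of generators, which I will extract from the abelianization of $\racg'_\sK$. Since $\racg_\sK$ depends only on $\sK^1$, and the reduced $0$-homology groups $\widetilde H_0(\sK_J)$ also depend only on $\sK^1$, I may first replace $\sK$ by the flag complex on $\sK^1$. Under this assumption Theorem~\ref{coxfund}(b) makes $\rk$ aspherical, and Theorem~\ref{coxfund}(d) identifies $\pi_1(\rk)\cong\racg'_\sK$, so $\rk$ is a model of $B\racg'_\sK$. Combining with Theorem~\ref{homrk} I obtain
\[
  \racg'^{\,ab}_\sK \;\cong\; H_1(\rk;\Z) \;\cong\; \bigoplus_{J\subset[m]}\widetilde H_0(\sK_J),
\]
which is free abelian of rank $\sum_{J}\rank\widetilde H_0(\sK_J)$. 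This is the lower bound for any generating set of $\racg'_\sK$, so matching this cardinality will automatically yield minimality.

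Next I would produce a generating set of $\racg'_\sK$ by applying the Reidemeister--Schreier process to the normal inclusion $\racg'_\sK\hookrightarrow\racg_\sK$, using the Schreier transversal consisting of the $2^m$ squarefree monomials $g_{i_1}\cdots g_{i_k}$ with $i_1<\cdots<i_k$ (which represent the cosets of $\racg'_\sK$ in $\racg_\sK^{ab}\cong\Z_2^m$). Rewriting the defining relations $g_i^2=1$ and $g_ig_j=g_jg_i$ (for $\{i,j\}\in\sK$) produces a list of generators, which I would then simplify using the Hall--Witt identities~\eqref{WH} and the antisymmetry $(a,b)=(b,a)^{-1}$ modulo higher commutators. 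The key reductions are: any commutator in which two adjacent entries belong to an edge of $\sK$ becomes trivial (giving the component constraint), and every commutator $(g_{i_1},\ldots,g_{i_\ell})$ can be transformed, using Jacobi-type relations on iterated commutators, into a combination of simple nested commutators of the prescribed shape $i<j>k_1>\cdots>k_{\ell-2}$. This should leave precisely the list~\eqref{commuset}.

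Finally, I would match the images of these commutators in $\racg'^{\,ab}_\sK$ with the basis of $\bigoplus_{J}\widetilde H_0(\sK_J)$ furnished by Theorem~\ref{homrk}: a commutator $(g_i,g_j,g_{k_1},\ldots,g_{k_{\ell-2}})$ with $J=\{i,j,k_1,\ldots,k_{\ell-2}\}$ should land in the $J$-summand, where it records the connected component of $\sK_J$ containing $i$ (distinct from the component of the top vertex~$j$). Since the selection rule ``$i$ is the smallest vertex in a component not containing $j$'' picks exactly one representative per such component, the images form a $\Z$-basis of $\racg'^{\,ab}_\sK$, proving both minimality and that the map to the abelianization is surjective---completing the argument. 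The main obstacle is the middle step: carrying out the Reidemeister--Schreier reduction and showing, via repeated use of the Hall--Witt identities, that every Schreier generator can in fact be expressed in terms of the very restricted subset of simple nested commutators~\eqref{commuset}. The combinatorial bookkeeping of the ordering conditions and the flag/component constraints is where all the work lies.
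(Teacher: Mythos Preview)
This theorem is not proved in the present paper: it appears in the Preliminaries with the citation \cite[Theorem~4.5]{pa-ve} and is simply quoted as a known result. There is therefore no ``paper's own proof'' to compare your proposal against.

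That said, your outline is a faithful reconstruction of the strategy in~\cite{pa-ve}. Two small remarks. First, the passage to the flag completion is harmless but unnecessary for the abelianization step: for \emph{any} $\sK$ one has $H_1(\rk;\Z)\cong\pi_1(\rk)^{ab}$ by Hurewicz, and Theorem~\ref{coxfund}(d) together with Theorem~\ref{homrk} already give $(\racg'_\sK)^{ab}\cong\bigoplus_J\widetilde H_0(\sK_J)$ without asphericity. (Flagness is needed only if one wants $\rk$ to model $B\racg'_\sK$ in all degrees.) Second, your honest admission that ``the main obstacle is the middle step'' is accurate: the Reidemeister--Schreier rewriting from the $2^m$ squarefree transversal, followed by the Hall--Witt reduction to the specific ordered nested commutators~\eqref{commuset}, is exactly where the work in~\cite{pa-ve} lies, and it is genuinely a nontrivial combinatorial induction rather than a formality. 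Your sketch of how the images land in the $J$-summands of $\bigoplus_J\widetilde H_0(\sK_J)$ is the right picture for the minimality half, and matches Corollary~\ref{h1rk}.
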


\begin{remark}
In~\cite{pa-ve} commutators were nested to the right. Now we nest them to the~left.
\end{remark}

From Theorems~\ref{homrk} and \ref{gscox} we get:
\begin{corollary}\label{h1rk}
The group $H_1(\rk) = \racg_\sK' / \racg_\sK''$ is a free abelian group of rank $\sum_{J\subset[m]}\rank\widetilde H_0(\sK_J)$ with basis consisting of the images of the iterated commutators described in Theorem~\ref{gscox}.
\end{corollary}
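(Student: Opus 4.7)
The plan is to combine the two preceding theorems and use Hurewicz to translate between $\pi_1(\rk)$ and $H_1(\rk)$. First, by Theorem~\ref{coxfund}(d) we have $\pi_1(\rk)\cong\racg'_\sK$, and the Hurewicz theorem identifies $H_1(\rk)$ with the abelianisation of $\pi_1(\rk)$. Hence
\[
H_1(\rk)\;\cong\;\racg'_\sK/(\racg'_\sK,\racg'_\sK)\;=\;\racg'_\sK/\racg''_\sK.
\]

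Next, Theorem~\ref{homrk} applied with $k=1$ gives
\[
H_1(\rk;\Z)\;\cong\;\bigoplus_{J\subset[m]}\widetilde H_0(\sK_J),
\]
which is a free abelian group, and its rank equals $\sum_{J\subset[m]}\rank\widetilde H_0(\sK_J)$.

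Finally, Theorem~\ref{gscox} provides a finite generating set for $\racg'_\sK$ consisting of precisely $\sum_{J\subset[m]}\rank\widetilde H_0(\sK_J)$ nested commutators of the form~\eqref{commuset}. The images of these elements in the quotient $\racg'_\sK/\racg''_\sK$ therefore form a generating set of the free abelian group $H_1(\rk)$ whose cardinality equals the rank. Any generating set of a free abelian group $\Z^n$ having exactly $n$ elements is automatically a $\Z$-basis (the corresponding surjection $\Z^n\twoheadrightarrow\Z^n$ has trivial kernel, since a surjective endomorphism of a finitely generated abelian group is an isomorphism). This yields the claimed basis.

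There is essentially no technical obstacle here: the work has been done in Theorems~\ref{coxfund},~\ref{homrk},~\ref{gscox}, and the only point worth stating carefully is the last rank argument showing that a minimal generating set of the correct cardinality is a basis of the free abelian group.
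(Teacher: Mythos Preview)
Your proof is correct and follows essentially the same approach as the paper, which simply records the corollary as an immediate consequence of Theorems~\ref{homrk} and~\ref{gscox}. You have in fact supplied more detail than the paper does, including the Hurewicz step via Theorem~\ref{coxfund}(d) and the rank argument showing the generators form a basis.
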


The following standard result holds (see \cite[\S5.3]{Ma-Car-Sol}):

\begin{proposition}\label{comb}
Let $G$ be a group with generators $g_i, i \in I$. The terms of the lower central series $\gamma_k(G)$ are generated by simple nested commutators of length greater than or equal to $k$ from the generators and their inverses.
\end{proposition}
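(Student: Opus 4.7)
The plan is to set $S_k$ equal to the subgroup of $G$ generated by all simple nested commutators of length $\geq k$ in the $g_i^{\pm 1}$, and to prove $S_k = \gamma_k(G)$ by induction on $k$. The inclusion $S_k \subseteq \gamma_k(G)$ is immediate from the definition of the lower central series, and the base case $k = 1$ reduces to $S_1 = G$. An auxiliary fact I will use throughout is that each $S_k$ is normal in $G$: for a simple nested commutator $h$ of length $\ell \geq k$ and a generator $g_j^{\pm 1}$, the identity $h^{g_j^{\pm 1}} = h \cdot (h, g_j^{\pm 1})$ writes $h^{g_j^{\pm 1}}$ as a product of two simple nested commutators (of lengths $\ell$ and $\ell + 1$), both of length $\geq k$. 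Since conjugation distributes over products and inverses, $S_k$ is closed under conjugation by every element of $G$.

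For the inductive step, assume $S_k = \gamma_k(G)$. Then $\gamma_{k+1}(G) = (\gamma_k(G), G) = (S_k, G)$, so it suffices to show $(s, y) \in S_{k+1}$ for every $s = h_1^{\varepsilon_1} \cdots h_r^{\varepsilon_r} \in S_k$ (each $h_i$ a simple nested commutator of length $\geq k$) and every $y = g_{s_1}^{\delta_1} \cdots g_{s_t}^{\delta_t} \in G$. First, iterated application of the identity $(a, bc) = (a, c)(a, b)(a, b, c)$ from~\eqref{WH} on the right argument expresses $(h, y)$, for any individual $h$ of length $\geq k$, as a product of simple nested commutators $(h, g_{s_{j_1}}^{\delta_{j_1}}, \ldots, g_{s_{j_p}}^{\delta_{j_p}})$ of length $\geq k + 1$, all lying in $S_{k+1}$. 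Then iterated application of $(ab, c) = (a, c)(a, c, b)(b, c)$ on the left argument reduces $(s, y)$ to a product whose factors are either terms $(h_i, y) \in S_{k+1}$ (handled above) or ``cross terms'' $((h_i, y), z)$, where $z$ is a tail product of $h_j^{\varepsilon_j}$'s.

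The main obstacle is the cross term, which is not a priori a product of simple nested commutators of generators. I would dispose of it by rewriting $((h_i, y), z) = (h_i, y)^{-1} \cdot (h_i, y)^{z}$: the factor $(h_i, y)$ lies in $S_{k+1}$ by the previous step, and since $S_{k+1}$ is normal in $G$ (by the auxiliary fact applied to simple nested commutators of length $\geq k + 1$), the conjugate $(h_i, y)^{z}$ also lies in $S_{k+1}$. Hence the cross term is in $S_{k+1}$, giving $(S_k, G) \subseteq S_{k+1}$ and therefore $\gamma_{k+1}(G) = S_{k+1}$. Inverse exponents $\varepsilon_i = -1$ or $\delta_j = -1$ are handled by the standard identities $(a^{-1}, b) = ((a, b)^{-1})^{a^{-1}}$ and $(a, b^{-1}) = ((a, b)^{-1})^{b^{-1}}$, which via the same normality argument reduce to the positive-exponent case.
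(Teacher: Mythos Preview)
Your argument is correct: the induction on $k$, the normality of each $S_k$, and the reduction of $(s,y)$ to simple nested commutators via the Hall--Witt identities together with the normality trick for the cross terms all go through as you describe. Note only that the paper does not supply its own proof of this proposition --- it simply records it as a standard fact with a reference to \cite[\S5.3]{Ma-Car-Sol} --- so there is nothing to compare against; what you have written is essentially the classical proof found in that reference.

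One small presentational point: when you expand $(h,y)$ with $y=g_{s_1}^{\delta_1}\cdots g_{s_t}^{\delta_t}$, the cleanest way to get genuinely \emph{simple nested} commutators $(h,g_{j_1},\ldots,g_{j_p})$ is to peel the first letter of $y$ (so that the factor $(h,g_{s_1}^{\delta_1})$ is again a simple nested commutator, to which you can recurse on the remaining word); peeling the last letter instead produces a factor $((h,w),g_{s_t}^{\delta_t})$ with $(h,w)$ already a product, which then needs your normality trick rather than yielding simple nested commutators directly. Either route lands in $S_{k+1}$, so this does not affect correctness.
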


\begin{proposition}[{\cite[Proposition 3.3]{ve3}}]\label{kv}
The square of any element of $\gamma_k(\racg_\sK)$ is contained in $\gamma_{k+1}(\racg_\sK)$.
\end{proposition}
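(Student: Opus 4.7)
The plan is to argue by induction on $k$, with the base case $k=1$ coming from the abelianization of $\racg_\sK$.

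\emph{Base case $k=1$.} The abelianization $\racg_\sK/\gamma_2(\racg_\sK)$ is isomorphic to $(\Z/2)^m$, which is immediate from the defining relations $g_i^2 = 1$; hence every $w \in \racg_\sK$ satisfies $w^2 \in \gamma_2(\racg_\sK)$.

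\emph{Inductive step.} Assume the result for $k$ and target $k+1$. I would first observe that the map $b \mapsto b^2 \gamma_{k+2}(\racg_\sK)$ from $\gamma_{k+1}(\racg_\sK)$ to $L^{k+1}(\racg_\sK)$ is a group homomorphism, because $L^{k+1}$ is abelian: for $b_1, b_2 \in \gamma_{k+1}$ the commutator $(b_1, b_2)$ lies in $\gamma_{2(k+1)} \subseteq \gamma_{k+2}$, so $b_1 b_2 \equiv b_2 b_1 \pmod{\gamma_{k+2}}$ and therefore $(b_1 b_2)^2 \equiv b_1^2 b_2^2 \pmod{\gamma_{k+2}}$. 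It consequently suffices to verify the conclusion on a generating set of $\gamma_{k+1}(\racg_\sK) = (\gamma_k(\racg_\sK), \racg_\sK)$, namely on commutators $(x, y)$ with $x \in \gamma_k(\racg_\sK)$ and $y \in \racg_\sK$.

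The main computation applies the first Hall--Witt identity from~\eqref{WH} with $a = x$ and $b = c = y$, yielding
\[
  (x, y^2) = (x, y)^2 (x, y, y), \qquad \text{so} \qquad (x, y)^2 = (x, y^2)(x, y, y)^{-1}.
\]
The factor $(x, y, y) = ((x, y), y)$ lies in $(\gamma_{k+1}(\racg_\sK), \racg_\sK) = \gamma_{k+2}(\racg_\sK)$, and by the base case $y^2 \in \gamma_2(\racg_\sK)$, so $(x, y^2) \in (\gamma_k(\racg_\sK), \gamma_2(\racg_\sK)) \subseteq \gamma_{k+2}(\racg_\sK)$. Hence $(x, y)^2 \in \gamma_{k+2}(\racg_\sK)$, completing the induction. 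The small piece of cleverness is trading the square $(x, y)^2$ for the commutator $(x, y^2)$, which exploits $y^2 \in \gamma_2$ to raise the commutator weight by one; beyond this I expect no serious obstacle.
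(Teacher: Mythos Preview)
Your argument is correct. The induction is sound: the base case follows from $\racg_\sK/\gamma_2(\racg_\sK)\cong(\Z/2)^m$, the reduction to generators works because squaring is a homomorphism into the abelian quotient $L^{k+1}$, and the Hall--Witt computation $(x,y)^2=(x,y^2)(x,y,y)^{-1}$ together with $y^2\in\gamma_2$ pushes both factors into $\gamma_{k+2}$.

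As for comparison: the present paper does not actually prove this proposition---it is quoted with a citation to \cite[Proposition~3.3]{ve3} and used as a black box. So there is no in-paper argument to set yours against. Your proof is the standard one and is exactly the sort of argument one would expect to find in~\cite{ve3}; there is nothing missing.
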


There are also the following results:

\begin{proposition}[{\cite[(4.19)]{WaldingerLCS}}]\label{numbergens3}
Let $\sK$ be a discrete set of $3$ points, i.e. $\racg_\sK = \mathbb Z_2\langle g_1 \rangle \ast \mathbb Z_2\langle g_2 \rangle \ast \mathbb Z_2\langle g_3 \rangle$. Then $L^4(\racg_\sK)$ has a minimal set of $8$ generators.
\end{proposition}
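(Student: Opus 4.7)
The plan is to bound $\dim_{\Z_2} L^4(\racg_\sK)$ from both sides and match the bounds at~$8$.

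\textbf{Upper bound.} By Proposition~\ref{comb}, $\gamma_4(\racg_\sK)$ is generated by simple nested commutators of length $\ge 4$ in $g_1,g_2,g_3$ (inverses contribute nothing since $g_i^{-1}=g_i$). By Proposition~\ref{kv}, $L^4(\racg_\sK)$ is a $\Z_2$-vector space, so it is spanned by the classes of such length-$4$ commutators. Equivalently, via the epimorphism $L_\sK\to L(\racg_\sK)\otimes\Z_2$ of~\cite{ve3} and Witt's formula $\tfrac{1}{4}(3^4-3^2)=18$ for the free Lie $\Z_2$-algebra on three generators, one gets the initial estimate $\dim L^4(\racg_\sK)\le 18$.

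\textbf{Reduction to $8$.} The additional relations come from $g_i^2=1$. Applied to $a\in\gamma_k$, the Hall--Witt identity $(g_i^2,a)=(g_i,a)(g_i,a,g_i)(g_i,a)$ yields $(g_i,a,g_i)\equiv (g_i,a)^{-2}\pmod{\gamma_{k+3}}$. Taking $a$ to be each of the length-$2$ or length-$3$ simple nested commutators produces a concrete list of $\Z_2$-linear relations in $L^4$, identifying brackets of the form $[[\mu_i,y],\mu_i]$ with squaring-map outputs. Combined with antisymmetry and the Jacobi identity (both inherited from the Hall--Witt identities~\ref{WH} in the standard way), a systematic reduction collapses the $18$ candidates down to exactly~$8$ generators, which then form the desired minimal generating set. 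A natural choice contains contributions from squares of length-$2$ and length-$3$ commutators together with genuinely four-letter nested commutators; a concrete enumeration, analogous to the one carried out in~\cite{ve3} for $L^3$, fixes the specific representatives.

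\textbf{Matching lower bound and main obstacle.} The principal difficulty is showing that these $8$ classes are $\Z_2$-linearly independent. A naive Magnus embedding $g_i\mapsto 1+x_i$ into $\Z_2\langle x_1,x_2,x_3\rangle/(x_i^2)$ fails here, because in a $2$-torsion setting the Jennings filtration is strictly coarser than the lower central series; for instance $(g_i,g_j)^4\in\gamma_4$ lands in augmentation degree $8$, not $4$, and is therefore invisible modulo $I^5$. One must instead either (i) use a refined integral Magnus embedding into $\Z\langle x_1,x_2,x_3\rangle/(x_i^2+2x_i)$ that preserves the $2$-torsion structure, or (ii) produce, for each of the $8$ candidates, a quotient of $\racg_\sK$ onto a finite $2$-group in which the corresponding class is seen to be non-trivial. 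Either approach requires a delicate case-by-case bookkeeping of how $g_i^2=1$ propagates through the first few lower central series quotients, and this verification is the genuine technical heart of Waldinger's proof in~\cite{WaldingerLCS}; it is the step I expect to be the main obstacle in any reproduction of the argument.
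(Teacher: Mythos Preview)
The paper does not prove this proposition at all: it is stated with a citation to Waldinger~\cite[(4.19)]{WaldingerLCS} and used as a black box. (The upper bound $\le 8$ is implicitly re-derived later in the proof of Proposition~\ref{freeproduct3}, where an explicit list of $15$ length-$4$ commutators is whittled down to $8$ by hand; but the lower bound --- that these $8$ are independent --- is taken entirely from Waldinger.) So there is no ``paper's own proof'' to compare your proposal to.

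As for your proposal itself, it is not a proof but an outline with both halves left open. For the upper bound you assert that ``a systematic reduction collapses the $18$ candidates down to exactly~$8$'', but you do not perform this reduction; the paper's Proposition~\ref{freeproduct3} shows that this step, while elementary, requires several pages of careful commutator manipulations modulo $\gamma_5$ and is not something one can wave through. For the lower bound you explicitly say the independence verification ``is the genuine technical heart of Waldinger's proof'' and identify it as the main obstacle --- which is honest, but means your write-up ultimately also rests on the same citation the paper uses. In short, your proposal correctly diagnoses the structure of the argument (upper bound by relation-chasing, lower bound by some form of Magnus-type embedding or finite-quotient detection), and your remark about the naive $\Z_2$-Magnus map failing because the Jennings/dimension filtration is coarser than the lower central series in the $2$-torsion setting is a genuinely useful observation; but neither half is actually completed, so what you have is a roadmap rather than a proof.
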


\begin{proposition}[{\cite[(4.19)]{WaldingerLCS}}]\label{numbergens3oneedge}
Let $\sK$ be a simplicial complex on $3$ points with a single edge, i.e. $\racg_\sK = (\mathbb Z_2\langle g_1 \rangle \oplus \mathbb Z_2\langle g_2 \rangle) \ast \mathbb Z_2\langle g_3 \rangle$. Then $L^4(\racg_\sK)$ has a minimal set of $4$ generators.
\end{proposition}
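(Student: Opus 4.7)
By Proposition~\ref{comb}, the subgroup $\gamma_4(\racg_\sK)$ is generated by simple nested commutators of length at least $4$ in the generators $g_1,g_2,g_3$, so $L^4(\racg_\sK)=\gamma_4/\gamma_5$ is generated, as an abelian group, by the images of length-$4$ simple nested commutators $[g_{i_1},g_{i_2},g_{i_3},g_{i_4}]$ with $i_k\in\{1,2,3\}$. By Proposition~\ref{kv}, $L^4(\racg_\sK)$ is a vector space over $\mathbb Z_2$, so sign bookkeeping disappears and the Jacobi identity may be used in its symmetric form.

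The first step is to thin out the enumeration. Since $[g_i,g_i]=0$ and $[g_1,g_2]=0$, the innermost bracket $[g_{i_1},g_{i_2}]$ must be $[g_1,g_3]$ or $[g_2,g_3]$ up to antisymmetry, leaving $2\cdot 3^2=18$ candidates. A single application of Jacobi to the triple $(g_1,g_3,g_2)$, together with $[g_1,g_2]=0$, yields $[g_1,g_3,g_2]=[g_2,g_3,g_1]$ already in $L^3$, and right-bracketing by a fourth generator identifies three further pairs among the $18$ candidates in $L^4$. Iterating Jacobi at the outer slots, and exploiting that any bracket of two classes whose lengths sum to at least $5$ vanishes in $L^4$, I expect the remaining $15$ classes to collapse to exactly $4$ surviving ones, which then give a generating set of the claimed size.

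The hard direction is the \emph{minimality}: showing that these $4$ classes are linearly independent over $\mathbb Z_2$ in $L^4(\racg_\sK)$. My plan is to pull them back along the epimorphism $L_\sK\to L(\racg_\sK)$ from~\cite{ve3} to the Lie graph algebra $L_\sK$ over $\mathbb Z_2$, whose graded dimensions are accessible through a Witt-type formula for partially commutative Lie algebras, and then to track the additional relations introduced by the squaring property of Proposition~\ref{kv}. An independent check is to match this count with the direct calculation of Waldinger in~\cite{WaldingerLCS}, which proceeds by embedding $\racg_\sK$ into an associative $\mathbb Z_2$-algebra and enumerating independent degree-$4$ monomials. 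This dimension bookkeeping is the crux of the argument; the rest is enumeration.
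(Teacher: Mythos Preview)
The paper does not prove this proposition at all: it is stated with the citation \cite[(4.19)]{WaldingerLCS} and used as a black box (see its application in Theorem~\ref{commcox3}(c)). So your proposal is not being compared to an existing argument in the paper but to a bare citation.

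On your proposal itself, there are two gaps. The first is soft: the reduction from your $15$ surviving length-$4$ brackets down to $4$ is asserted (``I expect\ldots'') rather than performed. This is genuine bookkeeping---you have to feed in the degree-$3$ relation $[\mu_i,\mu_j,\mu_i]=[\mu_i,\mu_j,\mu_j]$ of~\eqref{equalinRC} together with Jacobi, not just Jacobi alone---and it is exactly the kind of computation the paper carries out in detail for the edge-free case in Proposition~\ref{freeproduct3}. Without it you only have an upper bound ``$\le 15$'', not ``$\le 4$''.

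The second gap is more serious. For minimality you propose to ``pull back along the epimorphism $L_\sK\to L(\racg_\sK)$'' and compute $\dim L_\sK^4$ by a Witt-type formula. But an epimorphism points the wrong way for independence: knowing preimages are independent in $L_\sK$ tells you nothing about their images in the quotient $L(\racg_\sK)$, and computing $\dim L_\sK^4$ only bounds $\dim L^4(\racg_\sK)$ from \emph{above}, which is the direction you already have. ``Tracking the additional relations'' amounts to computing the degree-$4$ kernel of that epimorphism, which is precisely the hard problem (and is known to be nonzero already for $m=2$, cf.\ \cite[Example~4.3]{ve3}). To prove the lower bound $\dim L^4(\racg_\sK)\ge 4$ you need a map \emph{out of} $L(\racg_\sK)$ separating your four candidates, or a direct computation in $\gamma_4/\gamma_5$. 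Your fallback---matching against Waldinger's count---is exactly what the paper does, so at that point you are citing the result rather than proving it.
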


%\begin{proposition}[{\cite[(4.19)]{WaldingerLCS}}]\label{numbergens4onetriangle}
%Пусть $\sK$ --- симплициальный комплекс на $4$ точках, являющийся несвязным объединением точки и границы треугольника, то есть $\racg_\sK = (\mathbb Z_2\langle g_1 \rangle \oplus \mathbb Z_2\langle g_2 \rangle \oplus \mathbb Z_2 \langle g_3 \rangle) \ast \mathbb Z_2\langle g_4 \rangle$. Тогда $L^4(\racg_\sK)$ имеет минимальный набор из $10$ образующих.
%\end{proposition}

%\begin{proposition}[{\cite[(4.19)]{WaldingerLCS}}]\label{numbergens4twoedges}
%Пусть $\sK$ --- симплициальный комплекс на $4$ точках, являющийся несвязным объединением двух отрезков, то есть $\racg_\sK = (\mathbb Z_2\langle g_1 \rangle \oplus \mathbb Z_2\langle g_2 \rangle) \ast (\mathbb Z_2 \langle g_3 \rangle \oplus \mathbb Z_2\langle g_4 \rangle)$. Тогда $L^4(\racg_\sK)$ имеет минимальный набор из $16$ образующих.
%\end{proposition}

%\begin{proposition}[{\cite[(4.19)]{WaldingerLCS}}]\label{numbergens4oneedge}
%Пусть $\sK$ --- симплициальный комплекс на $4$ точках, являющийся несвязным объединением двух точек и отрезка, то есть $\racg_\sK = (\mathbb Z_2\langle g_1 \rangle \oplus \mathbb Z_2\langle g_2 \rangle) \ast \mathbb Z_2 \langle g_3 \rangle \ast \mathbb Z_2\langle g_4 \rangle$. Тогда $L^4(\racg_\sK)$ имеет минимальный набор из $23$ образующих.
%\end{proposition}

\begin{proposition}[{\cite[(4.19)]{WaldingerLCS}}]\label{numbergens4}
Let $\sK$ be a discrete set of $4$ points, i.e. $\racg_\sK = \mathbb Z_2\langle g_1 \rangle \ast \mathbb Z_2\langle g_2 \rangle \ast \mathbb Z_2\langle g_3 \rangle \ast \mathbb Z_2\langle g_4 \rangle$. Then $L^4(\racg_\sK)$ has a minimal set of $32$ generators.
\end{proposition}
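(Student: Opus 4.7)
The plan is to combine Proposition~\ref{comb} with Proposition~\ref{kv} to present $L^4(\racg_\sK)$ as an $\mathbb{F}_2$-vector space spanned by classes of simple nested commutators of length $4$, and then reduce using Jacobi and Hall--Witt identities together with the squaring relations to obtain the claimed dimension $32$.

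By Proposition~\ref{comb}, the subgroup $\gamma_4(\racg_\sK)$ is generated by simple nested commutators of length $\ge 4$ in the generators $g_1,\ldots,g_4$ (recall $g_i^{-1}=g_i$). In the quotient $L^4(\racg_\sK)=\gamma_4/\gamma_5$ only the length-$4$ commutators contribute, yielding a spanning set of at most $4^4=256$ classes. By Proposition~\ref{kv} this quotient has exponent $2$, hence is an $\mathbb{F}_2$-vector space whose minimal number of generators equals $\dim_{\mathbb{F}_2}L^4(\racg_\sK)$; the task reduces to computing this dimension.

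The first stage of the reduction discards obviously trivial classes (those with $(g_i,g_i)$ appearing innermost) and applies the Jacobi identity to cut the spanning set down to a Hall basis of the free Lie algebra $L_\sK\otimes\mathbb{F}_2$ on four generators, which is the source of the epimorphism $L_\sK\otimes\mathbb{F}_2\twoheadrightarrow L(\racg_\sK)$ described in~\cite{ve3}. By Witt's formula this degree-$4$ space has dimension $\tfrac14(4^4-4^2)=60$, providing an upper bound $\dim L^4(\racg_\sK)\le 60$.

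The second stage imposes the additional relations coming from Proposition~\ref{kv}: for every $w\in\gamma_k$ one has $w^2\in\gamma_{k+1}$, and the Hall--Witt identities~\eqref{WH} allow us to expand such squares and their commutators as words in simple nested commutators modulo $\gamma_5$. Collecting these dependencies should produce exactly $60-32=28$ independent relations in degree $4$. The main obstacle lies in this bookkeeping: one has to list all independent squaring relations obtained by expanding $(g_{i_1}g_{i_2})^2$, $((g_{i_1},g_{i_2})g_{i_3})^2$, etc.\ via~\eqref{WH}, and then verify that exactly $32$ Hall-basis classes remain $\mathbb{F}_2$-linearly independent. For the lower bound (showing that fewer than $32$ generators do not suffice), I would exhibit $32$ explicitly chosen classes as nonzero in a computable quotient, for instance by pairing them with $H_\ast(\rk;\mathbb{F}_2)$ through Theorem~\ref{coxfund}(d) together with Theorem~\ref{homrk}, or by a Magnus-type expansion of $\racg_\sK$ in a free associative $\mathbb{F}_2$-algebra.
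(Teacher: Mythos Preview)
The paper does not prove this proposition at all: it is quoted verbatim from Waldinger~\cite[(4.19)]{WaldingerLCS} and used as a black box (specifically, to certify that the $32$ commutators produced in Proposition~\ref{freeproduct4} form a \emph{minimal} generating set). So there is no ``paper's own proof'' to compare against; your proposal is an attempt to re-derive a cited result.

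On its own merits, your proposal is a plan rather than a proof, and the two crucial steps are not carried out. For the upper bound you assert that the squaring relations supply ``exactly $60-32=28$ independent relations in degree~$4$'', but you do not enumerate them or show they are independent; the expansions of $(g_{i_1}g_{i_2})^2$, $((g_{i_1},g_{i_2})g_{i_3})^2$, etc.\ via~\eqref{WH} do not obviously organise into $28$ independent degree-$4$ relations, and the heart of Waldinger's argument (or of any direct proof) is precisely this bookkeeping. Without it you have only $\dim L^4\le 60$.

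For the lower bound, the route through Theorems~\ref{coxfund}(d) and~\ref{homrk} does not address the right quotient: $H_*(\rk)$ computes the homology of the \emph{commutator subgroup} $\racg'_\sK$, in particular $H_1(\rk)\cong\racg'_\sK/\racg''_\sK$ (Corollary~\ref{h1rk}), not the lower-central-series quotient $\gamma_4/\gamma_5$. There is no evident map from $\gamma_4/\gamma_5$ to $H_*(\rk)$ that would let you detect $32$ independent classes. The Magnus-expansion idea is more promising, but again nothing is executed. As written, then, neither inequality is established.
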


\begin{theorem}[{\cite[Theorem 4.5]{ve3}}]\label{LRCK}
Let $\sK$ be a simplicial complex on $[m]$, let $\racg_\sK$ be the right-angled Coxeter group corresponding to $\sK$, and $L(\racg_\sK)$ its associated Lie algebra. Then:
\begin{itemize}
\item[(a)] $L^1(\racg_\sK)$ has a basis $\overline{g}_1, \ldots, \overline{g}_m$;
\item[(b)] $L^2(\racg_\sK)$ has a basis consisting of the commutators $[\overline{g}_i, \overline{g}_j]$ with $i < j$ and ${\{i, j\} \notin \sK}$;
\item[(c)] $L^3(\racg_\sK)$ has a basis consisting of
\begin{itemize}
\item[--] the commutators $[\overline{g}_i, \overline{g}_j, \overline{g}_j]$ with $i < j$ and $\{i, j\} \notin \sK$;
\item[--] the commutators $[\overline{g}_i, \overline{g}_j, \overline{g}_k]$ where $i < j > k, i \neq k$ and $i$ is the smallest vertex in a connected component of $\sK_{\{i,j,k\}}$ not containing~$j$.
\end{itemize}
\end{itemize}
\end{theorem}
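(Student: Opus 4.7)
The plan is to prove each part by exhibiting a spanning set of the stated form and then verifying linear independence over $\Z_2$ (recall that each $L^k(\racg_\sK)$ is a $\Z_2$-vector space for $k\ge 1$ by Proposition~\ref{kv}). Part~(a) is immediate: modulo $\gamma_2(\racg_\sK)$ the only defining relations on the $g_i$ are $g_i^2=1$, so $L^1(\racg_\sK)$ is the abelianization $\racg_\sK^{\mathrm{ab}}\cong \Z_2^m$ with the claimed basis.

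For the spanning halves of (b) and (c), I use Proposition~\ref{comb} to generate $L^k(\racg_\sK)$ by simple nested brackets of length~$k$ in the $\overline g_i$. In (b), antisymmetry reduces to $i<j$ and the commutation relations on $\sK$ kill the forbidden brackets. In (c), I use antisymmetry, the already-proved relations of~(b), and the Jacobi identity to push each triple bracket $[\overline g_a, \overline g_b, \overline g_c]$ into normal form. A case split on $\sK_{\{a,b,c\}}$ shows that when two indices coincide the bracket reduces to a Type~1 element $[\overline g_i,\overline g_j,\overline g_j]$, and when all three are distinct iterated Jacobi yields a representative $[\overline g_i,\overline g_j,\overline g_k]$ with the stated ordering and minimality conditions.

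For linear independence I exploit the inclusion $\racg_\sK''\subset\gamma_3(\racg_\sK)$, which yields a natural surjection
\[
\pi\colon H_1(\rk) = \racg_\sK'/\racg_\sK'' \twoheadrightarrow \racg_\sK'/\gamma_3(\racg_\sK) = L^2(\racg_\sK).
\]
By Corollary~\ref{h1rk} and Theorem~\ref{gscox}, the source has an explicit $\Z$-basis of iterated commutators; the length-$2$ ones map onto the claimed generators of $L^2$, while the longer ones lie in $\gamma_3(\racg_\sK)$ and hence in $\ker\pi$. Tensoring with $\Z_2$ and matching against the spanning bound completes~(b). The analogous construction for $\gamma_3/\gamma_4$ accounts for the Type~2 elements in~(c): they correspond to the length-$3$ part of the basis of $H_1(\rk)$ indexed by $\widetilde H_0(\sK_J)$ with $|J|=3$. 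The Type~1 elements $[\overline g_i,\overline g_j,\overline g_j]$ are forced by the relation $g_i^2=1$: the first Hall--Witt identity~\eqref{WH} with $a=g_i$ and $bc=g_jg_j=1$ gives $(g_i,g_j)^2\cdot(g_i,g_j,g_j)=1$, hence the congruence $(g_i,g_j)^2\equiv (g_i,g_j,g_j)^{-1}\pmod{\gamma_4(\racg_\sK)}$, which in the $\Z_2$-vector space $L^3$ becomes $\overline{(g_i,g_j)^2}=[\overline g_i,\overline g_j,\overline g_j]$.

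The main obstacle is the joint independence of the Type~1 and Type~2 families in $L^3(\racg_\sK)$, since the Type~1 generators are invisible to the projection from $H_1(\rk)$. I would match two bounds: the lower bound, that the number of Type~1 generators (equal to the number of non-edges of $\sK^1$) plus the number of Type~2 generators (equal to $\rank\bigoplus_{|J|=3}\widetilde H_0(\sK_J)$) is realised in $L^3$, against the upper bound coming from the spanning set of the previous paragraphs. The core technical step is verifying that no nontrivial $\Z_2$-linear combination of Type~1 elements lies in the image of the Type~2 projection from $H_1(\rk)$, which follows by a careful bookkeeping of the Hall--Witt expansions above together with the freeness of $H_1(\rk)$ as an abelian group from Corollary~\ref{h1rk}.
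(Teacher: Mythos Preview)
This theorem is quoted from~\cite{ve3} and the present paper gives no proof of it, so there is nothing in the paper to compare your argument against; I can only evaluate your proposal on its own merits.

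Your spanning arguments for (a)--(c) are fine, but the independence arguments for (b) and (c) have a real gap. The surjection
\[
  \pi\colon H_1(\rk)=\racg_\sK'/\racg_\sK''\twoheadrightarrow \racg_\sK'/\gamma_3(\racg_\sK)=L^2(\racg_\sK)
\]
only tells you that $L^2(\racg_\sK)$ is a \emph{quotient} of $H_1(\rk)$; observing that the longer basis commutators of Theorem~\ref{gscox} land in $\ker\pi$ just says $L^2$ is a quotient of $H_1(\rk)/\langle\text{longer basis elements}\rangle\otimes\Z_2\cong\Z_2^{N}$ (with $N$ the number of non-edges). That is the \emph{same} upper bound you already obtained from spanning, so ``matching against the spanning bound'' yields nothing new. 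To finish (b) you need a lower bound on $\dim_{\Z_2}L^2$, and nothing in your write-up supplies one. A clean way to get it is by retractions: for each non-edge $\{i,j\}$ the map $\racg_\sK\to \Z_2\ast\Z_2$ sending $g_i,g_j$ to the two generators and all other $g_k$ to $1$ induces $L^2(\racg_\sK)\to L^2(\Z_2\ast\Z_2)\cong\Z_2$ which picks out the coefficient of $[\overline g_i,\overline g_j]$; assembling these gives an injection $L^2(\racg_\sK)\hookrightarrow\Z_2^{N}$ on the span of your generators.

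The same problem recurs, more seriously, for (c). Your ``analogous construction for $\gamma_3/\gamma_4$'' again gives only an upper bound. You correctly identify the hard point---joint independence of the Type~1 elements $[\overline g_i,\overline g_j,\overline g_j]$ and the Type~2 triple brackets---but the concluding sentence (``follows by a careful bookkeeping of the Hall--Witt expansions\dots together with the freeness of $H_1(\rk)$'') is not a proof. Freeness of $H_1(\rk)$ over~$\Z$ does not by itself prevent relations appearing after you pass to $\gamma_3/\gamma_4$: for instance $(g_i,g_j,g_j)=(g_j,g_i)^2$ already shows that a length-$3$ commutator equals twice a length-$2$ basis element in $H_1(\rk)$, so the filtration by the $\gamma_k$ genuinely mixes the length grading, and one cannot read off $\dim L^3$ from the basis of Corollary~\ref{h1rk} alone. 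Retractions to three-generator quotients $\racg_{\sK_{\{i,j,k\}}}$ will separate Type~2 elements with different index sets, but to separate the Type~1 elements from the Type~2 ones you still have to analyse $L^3$ of a three-generator right-angled Coxeter group directly; this is where the actual work in~\cite{ve3} lies, and your sketch does not reproduce it.
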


\section{Main results}

Let $G$ be a group. The commutator of elements $a$ and $b$ is given by the formula $(a, b) = a^{-1}b^{-1}ab$.

Consider a simplicial complex --- a discrete set of $m$ points. We have $m$ generators $\mu_i$, each corresponding to its own $\mathbb Z_2$ in the direct sum of $m$ instances of $\mathbb Z_2$ (this is the one-dimensional graded component of $L(RC_\mathcal K)$). Let $FL(\mu_1,\ldots,\mu_m)$ be a free graded Lie algebra over $\mathbb Z_2$ with generators $\mu_1,\ldots,\mu_m$ of dimension~$1$. We have $L^1(RC_\sK) \cong FL^1\langle \mu_1, \ldots, \mu_m \rangle$, $L^2(RC_\sK) \cong FL^2\langle \mu_1, \ldots, \mu_m \rangle$ (since there are only commutators of length $2$ that are described in theorem~\ref{LRCK}), $L^3(RC_\sK) \cong FL^3\langle \mu_1, \ldots, \mu_m \rangle / ( [\mu_i, \mu_j, \mu_j]=[\mu_i, \mu_j, \mu_i] \; \forall i < j )$. The latter follows from Theorem~\ref{LRCK} and the following relation in an arbitrary right-angled Coxeter group:
\begin{equation}\label{equalinRC}
(g_i, g_j, g_i) = (g_i, g_j, g_j).
\end{equation}
This relation is verified directly:
$$
(g_i, g_j, g_i)=(g_j, g_i)g_i(g_i, g_j)g_i=g_jg_ig_jg_ig_ig_ig_jg_ig_jg_i=g_jg_ig_jg_ig_jg_ig_jg_i=(g_j, g_i)^2,
$$
$$
(g_i, g_j, g_j)=(g_j, g_i)g_j(g_i, g_j)g_j=g_jg_ig_jg_ig_jg_ig_jg_ig_jg_j=g_jg_ig_jg_ig_jg_ig_jg_i=(g_j, g_i)^2.
$$

In what follows, we assume that $\mu_i = \overline{g_i} = [g_i]$ is the adjacency class of an element of the group~$g_i$.

\begin{proposition}
Let $\sK$ be a discrete set of $m$ points, i.e. $\racg_\sK = \mathbb Z_2\langle g_1 \rangle \ast \ldots \ast \mathbb Z_2\langle g_m \rangle$. Consider the Lie algebra
$$
F = FL\langle \mu_1, \ldots, \mu_m \rangle / ( [\mu_i, \mu_j, \mu_j]=[\mu_i, \mu_j, \mu_i] \; \text{ для любого } i < j ).
$$
%Первые три граудированные компоненты этой алгебры изоморфны первым трём градуированным компонентам алгебры $L(RC_\sK)$.
Then $F^i \cong L^i(RC_\sK)$ for $i \leq 3$.
%
%Кроме того в алгебре Ли $F^4$ и $L^4(\racg_\sK)$ имеются соотношения
%$$
%[\mu_i, \mu_j, \mu_i, \mu_i] = [\mu_i, \mu_j, \mu_i, \mu_j].
%$$
\end{proposition}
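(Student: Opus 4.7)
The plan is to build a graded Lie algebra homomorphism $\phi\colon F \to L(\racg_\sK)$ sending $\mu_i \mapsto \overline{g}_i$ and then verify it is an isomorphism in degrees $\le 3$. Well-definedness on $FL\langle \mu_1,\ldots,\mu_m\rangle$ is automatic. To descend to $F$, I need the defining relation to vanish on the target side, i.e.\ $[\overline{g}_i,\overline{g}_j,\overline{g}_j]=[\overline{g}_i,\overline{g}_j,\overline{g}_i]$ in $L^3(\racg_\sK)$. This is exactly the image of identity~\eqref{equalinRC}: since $(g_i,g_j,g_i)=(g_i,g_j,g_j)$ in $\racg_\sK$, the quotient $(g_i,g_j,g_i)\cdot(g_i,g_j,g_j)^{-1}$ is trivial, hence certainly lies in $\gamma_4$, so the two classes coincide in $\gamma_3/\gamma_4$.

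Degrees 1 and 2 are immediate. In degree 1 there are no relations, so $F^1$ has basis $\mu_1,\ldots,\mu_m$, matching Theorem~\ref{LRCK}(a). In degree 2 the defining relations of $F$ live in degree 3, so $F^2=FL^2$ has the usual basis $\{[\mu_i,\mu_j]:i<j\}$; because $\sK$ is discrete we have $\{i,j\}\notin\sK$ for all $i<j$, so Theorem~\ref{LRCK}(b) yields the matching basis of $L^2(\racg_\sK)$, and $\phi^2$ is an isomorphism.

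For degree 3 the strategy is: fix the candidate set
\[
B=\bigl\{[\mu_i,\mu_j,\mu_j]:i<j\bigr\}\cup\bigl\{[\mu_i,\mu_j,\mu_k]:i<j>k,\ i\neq k\bigr\}
\]
in $F^3$. Since $\sK$ is discrete, every singleton is its own connected component, so the ``smallest vertex in a component not containing $j$'' condition in Theorem~\ref{LRCK}(c) is automatic, and $\phi^3(B)$ is exactly the basis of $L^3(\racg_\sK)$ described there. Hence $\phi^3$ is surjective, and linear independence of $\phi^3(B)$ forces linear independence of $B$ in $F^3$. It therefore suffices to prove that $B$ spans $F^3$, for then $B$ is a basis of $F^3$ and $\phi^3$ is an isomorphism.

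To show spanning, take an arbitrary left-normed triple bracket $[\mu_a,\mu_b,\mu_c]$. If $a=b$ it vanishes. Otherwise, by antisymmetry (which over $\mathbb Z_2$ reads $[\mu_a,\mu_b]=[\mu_b,\mu_a]$) we may assume $a<b$. Now split on $c$: if $c=b$ we get a type-1 element; if $c<b$ and $c\ne a$ we get a type-2 element; if $c=a$ the defining relation of $F$ rewrites it as $[\mu_a,\mu_b,\mu_b]$, again type-1. The one remaining case $a<b<c$ is the main obstacle, and is handled by the Jacobi identity
\[
[\mu_a,\mu_b,\mu_c]+[\mu_b,\mu_c,\mu_a]+[\mu_c,\mu_a,\mu_b]=0,
\]
which over $\mathbb Z_2$ (again using $[\mu_c,\mu_a]=[\mu_a,\mu_c]$) gives $[\mu_a,\mu_b,\mu_c]=[\mu_b,\mu_c,\mu_a]+[\mu_a,\mu_c,\mu_b]$, both summands being type-2 members of $B$. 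This exhausts all cases, so $B$ spans $F^3$; combined with the surjectivity argument above this yields $F^3\cong L^3(\racg_\sK)$. As a sanity check, the resulting common dimension is $\binom{m}{2}+\tfrac{1}{3}m(m-1)(m-2)=\tfrac{1}{6}m(m-1)(2m-1)$, which agrees with the Witt count $\dim FL^3=\tfrac{1}{3}(m^3-m)$ minus the $\binom{m}{2}$ imposed relations, confirming that the relations are in fact linearly independent in $FL^3$.
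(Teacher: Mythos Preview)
Your argument is correct and is precisely the reasoning the paper sketches in the paragraph immediately preceding the proposition (using Theorem~\ref{LRCK} together with identity~\eqref{equalinRC}); the paper's formal proof consists only of a citation to \cite[Proposition~4.4]{ve3}, so you have in fact supplied the details that the paper omits. The spanning argument for $F^3$ via antisymmetry, the defining relation, and one Jacobi rewrite is exactly the computation needed, and your dimension count confirms that the $\binom{m}{2}$ relations are independent in $FL^3$.
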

\begin{proof}
Follows from~\cite[Proposition 4.4]{ve3}.
\end{proof}

\begin{proposition}\label{relationinL4}
Let $\sK$ be a discrete set of $m$ points, i.e. $\racg_\sK = \mathbb Z_2\langle g_1 \rangle \ast \ldots \ast \mathbb Z_2\langle g_m \rangle$. Consider the Lie algebra
$$
F = FL\langle \mu_1, \ldots, \mu_m \rangle / ( [\mu_i, \mu_j, \mu_j]=[\mu_i, \mu_j, \mu_i] \; \text{ для любого } i < j ).
$$
Then $F^4$ and $L^4(\racg_\sK)$ have the relations
$$
[\mu_i, \mu_j, \mu_i, \mu_i] = [\mu_i, \mu_j, \mu_i, \mu_j].
$$
\end{proposition}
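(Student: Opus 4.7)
My plan is to derive both relations by the same short Lie-algebra computation, whose only inputs are the defining degree-$3$ relation $[\mu_i,\mu_j,\mu_i]=[\mu_i,\mu_j,\mu_j]$ (supplied in $L(\racg_\sK)$ by equation~\eqref{equalinRC}) together with the Jacobi identity. Fix $i<j$ and set $c:=[\mu_i,\mu_j]$. By the degree-$3$ relation the two expressions $[c,\mu_i]$ and $[c,\mu_j]$ denote one and the same element, which I will call $d$. The first step is then to bracket $d$ once more, on the one hand with $\mu_i$ and on the other with $\mu_j$, each time re-expanding $d$ in the two alternative forms; this produces the two chains
\[
[c,\mu_i,\mu_i]=[d,\mu_i]=[c,\mu_j,\mu_i],\qquad [c,\mu_i,\mu_j]=[d,\mu_j]=[c,\mu_j,\mu_j].
\]

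The second step is to apply the Jacobi identity to the triple $(\mu_i,\mu_j,c)$:
\[
\bigl[[\mu_i,\mu_j],c\bigr]+\bigl[[\mu_j,c],\mu_i\bigr]+\bigl[[c,\mu_i],\mu_j\bigr]=0.
\]
The first summand equals $[c,c]=0$, so the remaining two collapse to $[c,\mu_i,\mu_j]=[c,\mu_j,\mu_i]$. Combining this with the two chains of the first step forces all four elements $[c,\mu_a,\mu_b]$ with $a,b\in\{i,j\}$ to be equal; in particular
\[
[\mu_i,\mu_j,\mu_i,\mu_i]=[c,\mu_i,\mu_i]=[c,\mu_i,\mu_j]=[\mu_i,\mu_j,\mu_i,\mu_j],
\]
which is the claim for $F^4$.

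The identical three-line manipulation, performed with $\overline g_k$ in place of $\mu_k$, proves the relation in $L^4(\racg_\sK)$: the required degree-$3$ relation is supplied by~\eqref{equalinRC} and Theorem~\ref{LRCK}, while the Jacobi identity holds in the Lie algebra associated with any group. I anticipate that the main obstacle will be purely notational: keeping track of the Jacobi signs and justifying that $d$ is an unambiguously defined element (so that the ``re-expansion'' in step one is legitimate). Once those points are verified the identity drops out in one line, with no recourse needed to the $\mathbb Z_2$-structure of the graded components.
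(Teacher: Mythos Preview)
Your argument is correct. For $F^4$ it coincides with the paper's: both use the defining degree-$3$ relation once and a single Jacobi identity (the paper writes the intermediate step as $[\mu_i,\mu_j,\mu_j,\mu_i]=[[\mu_i,[\mu_i,\mu_j]],\mu_j]+[[\mu_i,\mu_j],[\mu_j,\mu_i]]$, which is exactly your Jacobi on $(\mu_i,\mu_j,c)$ with $[c,c]=0$ rearranged).

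For $L^4(\racg_\sK)$ the routes diverge. The paper does not run the Lie-algebra computation; instead it expands the group commutators directly and shows that both $(g_i,g_j,g_i,g_i)$ and $(g_i,g_j,g_i,g_j)$ equal $(g_i,g_j)^4$ in $\racg_\sK$, using $(g_i,g_j,g_i)=(g_j,g_i)^2$ from~\eqref{equalinRC}. Your approach transports the $F^4$ argument to $L(\racg_\sK)$ by invoking that~\eqref{equalinRC} gives the needed degree-$3$ relation in $L^3$ and that Jacobi holds in any associated Lie algebra. This is shorter and, as you note, makes no use of the $\mathbb Z_2$-structure; the paper's group computation, on the other hand, yields the slightly stronger information that the two length-$4$ commutators are literally equal in $\gamma_4(\racg_\sK)$ (not merely modulo $\gamma_5$), which is occasionally useful later in the paper.
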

\begin{proof}
Let's expand the commutators in the group $\racg_\sK$:
$$(i,j,i,i)=((j,i)^2,i)=ijijijijijijijijii=ijijijijijijijij=(i,j)^4;$$
$$(i,j,i,j)=((j,i)^2,j)=ijijijijjjijijijij=ijijijijijijijij=(i,j)^4,$$
and therefore the corresponding elements in $L^4(\racg_\sK)$.

Now consider the algebra $F$, in which we have:
$$[\mu_i, \mu_j, \mu_i, \mu_i] = [\mu_i, \mu_j, \mu_j, \mu_i] = [[\mu_i,[\mu_i,\mu_j]],\mu_j] + [[\mu_i,\mu_j],[\mu_j,\mu_i]] = [\mu_i, \mu_j, \mu_i, \mu_j].$$
\end{proof}

\begin{lemma}\label{comtogPV}
For any group $G$ and for any $a, b, c \in G$ it is true that
\begin{equation}
(a,(b,c))\!=\!(a,c)(c,(b,a))(a,b)(c,b)(b,(a,c))(c,a)(b,a)(b,c).
\end{equation}
\begin{equation}
((a,b),c)\!=\!(b,a)(c,a)(c,b)((c,b),a)(a,b)(a,c)((a,c),b)(b,c).
\end{equation}
\end{lemma}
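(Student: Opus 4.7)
The plan is to derive both identities by mechanical rewriting of the third Hall--Witt identity in~\eqref{WH}. First I would eliminate each of the four conjugates $x^y$ on its right-hand side using the elementary identity $x^y = x\cdot(x,y)$. After this substitution the Hall--Witt identity takes the form
\begin{align*}
((a,b),c)((b,c),a)((c,a),b)
&= (b,a)(c,a)(c,b)((c,b),a)(a,b)(a,c)((a,c),b) \\
&\quad\cdot (b,c)((b,c),a)(a,c)(c,a)((c,a),b),
\end{align*}
a plain word in simple and doubly nested commutators.

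Next I would solve for $((a,b),c)$ by right-multiplying both sides by $((c,a),b)^{-1}((b,c),a)^{-1}$. The crucial observation is that on the right-hand side the tail
\[
((b,c),a)(a,c)(c,a)((c,a),b)\cdot((c,a),b)^{-1}((b,c),a)^{-1}
\]
collapses to the identity in three steps: the innermost $((c,a),b)((c,a),b)^{-1}$ cancels, then the pair $(a,c)(c,a)$ disappears since $(a,c)^{-1}=(c,a)$, and finally $((b,c),a)((b,c),a)^{-1}$ vanishes. What remains is exactly the second identity of the lemma.

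To obtain the first identity I would then apply the cyclic substitution $(a,b,c)\mapsto(b,c,a)$ in the just-derived formula, producing an expression for $((b,c),a)$ as a word of length eight. Inverting both sides and using the rules $(x,y)^{-1}=(y,x)$ and $((x,y),z)^{-1}=(z,(x,y))$ to reverse the order of the factors and transform each one gives $(a,(b,c))=((b,c),a)^{-1}$ precisely in the stated form.

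The main obstacle is purely bookkeeping: one has to verify that after the right-multiplication in the second step the factors $(a,c)$ and $(c,a)$ end up adjacent, so that the three-stage cancellation proceeds without ever requiring that non-commuting factors be exchanged. Once the substitution of $x^y$ is carried out carefully and the order of the factors on the right of Hall--Witt is tracked, this reduces to a direct inspection.
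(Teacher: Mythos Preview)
Your argument is correct. After substituting $x^y=x(x,y)$ into the Hall--Witt identity, the right-hand side is the twelve-letter word
\[
(b,a)(c,a)(c,b)((c,b),a)(a,b)(a,c)((a,c),b)(b,c)((b,c),a)(a,c)(c,a)((c,a),b),
\]
and right-multiplying both sides by $((c,a),b)^{-1}((b,c),a)^{-1}$ peels off the last four letters exactly as you describe: $((c,a),b)$ cancels with its inverse, then the now-exposed pair $(a,c)(c,a)$ collapses, and finally $((b,c),a)$ meets its inverse. This leaves the second identity, and your cyclic substitution followed by inversion recovers the first.

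This differs from the paper's proof, which simply verifies the first identity by expanding all commutators as words in $a,b,c$ and their inverses, and then obtains the second by inverting and applying the substitution $a\mapsto c$, $b\mapsto a$, $c\mapsto b$. Your route is more structural: it exhibits the lemma as a direct rearrangement of the Hall--Witt identity rather than as an opaque word equality, which explains \emph{why} such an eight-term formula exists and makes the subsequent Corollary~\ref{comtogPVbM} (the Jacobi-type congruence modulo deeper terms of the lower central series) feel less miraculous. The paper's approach, by contrast, requires no prerequisite identity but gives no insight into where the formula comes from.
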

\begin{proof}
The first identity is proved by directly expanding commutators and checking, while the second is proved by taking the inverse of the first one and replacing $b$ with $a$, $c$ with $b$, and $a$ with $c$ (the second can also be verified by expanding commutators).
\end{proof}

\begin{corollary}\label{comtogPVbM}
For any group $G$ and for any $a \in \gamma_i(G), b \in \gamma_j(G)$, $c~\in~\gamma_k(G)$ it is true that
\begin{equation}
(a,(b,c))\!\equiv\!(c,(b,a))(b,(a,c))\mod\gamma_{\min(2i+j+k, i+2j+k, i+j+2k)}(G).
\end{equation}
\begin{equation}\label{comtogPVbM2t}
((a,b),c)\!\equiv\!((c,b),a)((a,c),b)\mod\gamma_{\min(2i+j+k, i+2j+k, i+j+2k)}(G).
\end{equation}
\end{corollary}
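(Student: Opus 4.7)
The plan is to derive both congruences directly from the exact identities of Lemma~\ref{comtogPV} by rearranging the factors on the right-hand sides modulo $\gamma_N(G)$, where $N := \min(2i+j+k,\,i+2j+k,\,i+j+2k)$. The main tool is the standard fact that for $x \in \gamma_p(G)$ and $y \in \gamma_q(G)$ one has $xy \equiv yx \mod \gamma_{p+q}(G)$, so swapping two neighbouring factors costs an error whose weight is the sum of the two weights.

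First I would catalogue the weights of the eight factors on the right-hand side of the first identity of Lemma~\ref{comtogPV}: the ``short'' commutators $(a,c)$ and $(c,a)$ lie in $\gamma_{i+k}(G)$, the commutators $(a,b)$ and $(b,a)$ lie in $\gamma_{i+j}(G)$, and $(b,c)$ and $(c,b)$ lie in $\gamma_{j+k}(G)$, while the two ``long'' commutators $(c,(b,a))$ and $(b,(a,c))$ are in $\gamma_{i+j+k}(G)$. A transposition of two distinct factors from this list produces an error whose weight belongs to $\{2i+j+k,\,i+2j+k,\,i+j+2k,\,2i+j+2k,\,2i+2j+k,\,i+2j+2k\}$; since $i,j,k \ge 1$, each of these is at least $N$, so every such transposition is legal modulo $\gamma_N(G)$.

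With this in hand, I would transport $(c,a)$ leftward past the four intervening factors to meet $(a,c)$ and cancel exactly via $(a,c)(c,a) = 1$; then bring $(b,a)$ next to $(a,b)$ to cancel $(a,b)(b,a) = 1$; and finally bring $(b,c)$ next to $(c,b)$ to cancel $(c,b)(b,c) = 1$. What remains is the product $(c,(b,a))(b,(a,c))$, which gives the first congruence. The second congruence follows from the same bookkeeping applied to the second identity of Lemma~\ref{comtogPV}: the eight factors on the right-hand side have exactly the same weight distribution, with the roles of the short commutators and the three pairs of inverses unchanged, and the ordering of the two remaining long commutators produces $((c,b),a)((a,c),b)$.

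The argument has no conceptual obstacle: the quantity $N$ is constructed precisely so as to absorb every error term produced by the transpositions above. The only slightly tedious part is the verification of the six weight inequalities, which amount to the trivial observation that the weight of any short commutator added to $i+j+k$ exceeds at least one of $2i+j+k$, $i+2j+k$, $i+j+2k$.
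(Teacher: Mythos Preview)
Your proof is correct and follows essentially the same approach as the paper's: both start from the exact identities of Lemma~\ref{comtogPV}, catalogue the weights of the short and long commutator factors, observe that every transposition of adjacent factors costs an error in $\gamma_N(G)$ or deeper, and then rearrange to cancel each short commutator against its inverse, leaving only the two triple commutators. Your write-up is slightly more explicit about the order of cancellations, but the argument is the same.
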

\begin{proof}
Since $qw=wq(q,w)$, then for $q \in \gamma_{i_1}(G), w \in \gamma_{i_2}(G)$ we have $qw\equiv wq\mod\gamma_ {i_1+i_2}(G)$ since $(q, w) \in \gamma_{i_1+i_2}(G)$.
%Отсюда, в~\ref{comtogPV} мы можем переставлять местами $(a,b), (a,c), (b,c)$ и обратные к ним с тройными коммутаторами $a, b$ и $c$ по модулю \gamma_{i+j+k+1}.
We have $(a,b) \in \gamma_{i+j}(G), (a,c)\in\gamma_{i+k}(G), (b,c)\in\gamma_{j+ k}(G)$, and any nested triple commutator containing $a, b, c$ belongs to $\gamma_{i+j+k}(G)$. We get that $(a,b)(b,c)\equiv(b,c)(a,b)\mod\gamma_{i+2j+k}(G)$, $(a,b)(a ,c)\equiv(a,c)(a,b)\mod\gamma_{2i+j+k}(G)$, $(b,c)(a,c)\equiv(a,c)( b,c)\mod\gamma_{i+j+2k}(G)$. Also, if $A$ is any triple nested commutator of $a, b, c$, then we have $(a,b)A\equiv A(a,b)\mod\gamma_{2i+2j+k}( G)$, $(a,c)A\equiv A(a,b)\mod\gamma_{2i+j+2k}(G)$, $(b,c)A\equiv A(b,c) \mod\gamma_{i+2j+2k}(G)$. Using this, we rearrange the commutators in the identities from the lemma~\ref{comtogPV}, canceling all double commutators with their inverses. From this the equalities to be proved follow.
\end{proof}

\begin{proposition}\label{obrcom}
For any $a \in \gamma_k(G), b \in \gamma_m(G), c \in \gamma_n(G)$ it is true that $(a, b, c)^{-1} \equiv (b, a, c) \mod \gamma_{k+m+n+1}(G)$.
\end{proposition}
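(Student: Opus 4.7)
The plan is to reduce the statement to a single application of the first identity in~\eqref{WH}. Since $(x,y)^{-1}=(y,x)$ and in particular $(a,b)^{-1}=(b,a)$, the desired congruence $(a,b,c)^{-1}\equiv(b,a,c)$ is equivalent to
$$
((a,b)^{-1},c)\equiv((a,b),c)^{-1}\mod\gamma_{k+m+n+1}(G).
$$
Thus it suffices to prove the following general fact: for any $u\in\gamma_s(G)$ with $s\geq 2$ and any $z\in\gamma_n(G)$,
$$
(u^{-1},z)\equiv(u,z)^{-1}\mod\gamma_{s+n+1}(G).
$$
Applying this with $u=(a,b)\in\gamma_{k+m}(G)$, $s=k+m\geq 2$, and $z=c\in\gamma_n(G)$ will then yield the proposition.

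To establish the auxiliary fact I would substitute $x:=u^{-1}$, $y:=u$ into the first Hall--Witt identity $(xy,w)=(x,w)(x,w,y)(y,w)$ from~\eqref{WH}, taking $w:=z$. Since $xy=e$, the left-hand side is trivial, so
$$
1=(u^{-1},z)\,(u^{-1},z,u)\,(u,z),
$$
which rearranges to $(u^{-1},z)=(u,z)^{-1}\,(u^{-1},z,u)^{-1}$. The triple commutator $(u^{-1},z,u)$ is built from two elements of $\gamma_s(G)$ and one of $\gamma_n(G)$, so it lies in $\gamma_{2s+n}(G)$. Because $s\geq 2$, one has $2s+n\geq s+n+2$, hence this correction term belongs to $\gamma_{s+n+1}(G)$, and the auxiliary congruence follows. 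Feeding in $u=(a,b)$ and $z=c$ then gives the proposition.

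The only point that requires care is the weight estimate $2s+n\geq s+n+1$, i.e.\ $s\geq 1$; this is automatic since $k,m\geq 1$ force $s=k+m\geq 2$, leaving a full unit of slack. That is really the entire obstacle: no further input from Lemma~\ref{comtogPV} or Corollary~\ref{comtogPVbM} is needed, and the proof is a one-line manipulation of a single Hall--Witt relation combined with a trivial lower-central-series weight count.
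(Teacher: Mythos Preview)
Your argument is correct. The reduction to showing $(u^{-1},z)\equiv(u,z)^{-1}\bmod\gamma_{s+n+1}(G)$ for $u=(a,b)\in\gamma_{s}(G)$ with $s=k+m$ is sound, and the substitution $x=u^{-1}$, $y=u$ into the identity $(xy,w)=(x,w)(x,w,y)(y,w)$ (this is the \emph{second} identity in~\eqref{WH}, not the first, but that is only a labeling slip) gives exactly the relation $1=(u^{-1},z)(u^{-1},z,u)(u,z)$. The error term $(u^{-1},z,u)$ lies in $\gamma_{2s+n}(G)\subset\gamma_{s+n+1}(G)$ since $s\geq 1$, so the congruence follows.

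This is a genuinely different and more economical route than the paper's. The paper first writes $(a,b,c)^{-1}=(c,(a,b))$ and applies the Hall--Witt (Jacobi-type) relation modulo $\gamma_{k+m+n+1}$ to obtain $(c,(a,b))\equiv(b,c,a)(c,a,b)$; it then interchanges the two factors (cost: a commutator in $\gamma_{2(k+m+n)}$) and finally invokes Corollary~\ref{comtogPVbM} to collapse $(c,a,b)(b,c,a)$ into $(b,a,c)$. Thus the paper's proof relies on the preceding Lemma~\ref{comtogPV} and its corollary, while your argument needs nothing beyond a single Hall--Witt identity and an elementary weight estimate. What the paper's approach buys is that it exercises Corollary~\ref{comtogPVbM}, which is used again later; what your approach buys is brevity and independence from those auxiliary results.
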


\begin{proof}
Consider the transformations:
$$(a, b, c)^{-1} = (c, (a, b)) \equiv^{(1)} ((b, c), a)((c, a), b) \mod \gamma_{k + m + n + 1}(G) \equiv^{(2)}$$
$$\equiv^{(2)} ((c, a), b)((b, c), a) \mod \gamma_{2(k + m + n)}(G) \equiv^{(3)}$$
$$\equiv^{(3)} ((b, a), c) \mod \gamma_{\min(2k+m+n, k+2m+n, k+m+2n)}(G),$$
here $(1)$ follows from the Witt-Hall identity taken modulo the term of the lower central series, $(2)$ follows from the interchange of commutators, $(3)$ follows from corollary~\ref{comtogPVbM}. Since $k+m+n+1$ is the smallest number among the members of the lower central row in the chain, modulo comparison is taken from it.
\end{proof}

\begin{corollary}%\label{}
For any $a \in \gamma_k(G), b \in \gamma_m(G), c \in \gamma_n(G)$ it is true that $(a, b, c) \equiv (c, (b, a)) \mod \gamma_{k+m+n+1}(G)$.
\end{corollary}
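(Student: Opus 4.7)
The plan is to derive this corollary directly from Proposition~\ref{obrcom}, without re-running the Hall--Witt machinery. The key observation is that the right-hand side $(c,(b,a))$ is nothing but the inverse of a simple nested commutator in disguise: since $(x,y)^{-1}=(y,x)$ holds in any group, we have
$$
(c,(b,a)) = ((b,a),c)^{-1} = (b,a,c)^{-1},
$$
by the convention that nested commutators are bracketed to the left.

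Now I would apply Proposition~\ref{obrcom} with the roles of $a$ and $b$ interchanged. Since $b\in\gamma_m(G)$ and $a\in\gamma_k(G)$, the proposition yields
$$
(b,a,c)^{-1} \equiv (a,b,c) \pmod{\gamma_{k+m+n+1}(G)},
$$
where the modulus is symmetric in $k$ and $m$, so swapping $a$ and $b$ does not change it. Chaining the two congruences gives
$$
(a,b,c) \equiv (b,a,c)^{-1} = (c,(b,a)) \pmod{\gamma_{k+m+n+1}(G)},
$$
which is exactly the claimed congruence.

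Since the only nontrivial ingredient is Proposition~\ref{obrcom} itself, there is essentially no obstacle here: the proof is a two-line manipulation using only the elementary identity $(x,y)^{-1}=(y,x)$ and the symmetry of the modulus $k+m+n+1$ in its arguments. The main thing to check carefully is that the bracketing convention in the paper (left-nested commutators) is being applied consistently when rewriting $((b,a),c)^{-1}$ as $(b,a,c)^{-1}$.
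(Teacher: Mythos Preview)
Your proof is correct and essentially identical to the paper's own argument: both rewrite $(c,(b,a))=(b,a,c)^{-1}$ using $(x,y)^{-1}=(y,x)$ and then apply Proposition~\ref{obrcom} with the roles of $a$ and $b$ swapped. The only cosmetic difference is that the paper inserts the alias $((b,a)^{-1},c)=(a,b,c)$ as an intermediate expression.
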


\begin{proof}
From the Proposition~\ref{obrcom} we get
$$(c, (b, a)) = (b, a, c)^{-1} \equiv ((b, a)^{-1}, c) \mod \gamma_{k+m+n+1}(G) = (a, b, c).$$
\end{proof}

\begin{proposition}\label{freeproduct3}
Let $\sK$ be a discrete set of $3$-th points, i.e. $\racg_\sK = \mathbb Z_2\langle g_1 \rangle \ast \mathbb Z_2\langle g_2 \rangle \ast \mathbb Z_2\langle g_3 \rangle$. Then $L^4(\racg_\sK)\cong\mathbb Z_2^8$ has the following minimum set of generators:
$$
[\mu_1, \mu_2, \mu_1, \mu_1], [\mu_1, \mu_3, \mu_1, \mu_1], [\mu_2, \mu_3, \mu_2, \mu_1], [\mu_1, \mu_3, \mu_2, \mu_1], [\mu_1, \mu_3, \mu_1, \mu_2],
$$
$$
[\mu_2, \mu_3, \mu_2, \mu_2], [\mu_2, \mu_3, \mu_1, \mu_2], [\mu_1, \mu_3, \mu_2, \mu_3].
$$
%$$
%(g_1, g_2, g_1, g_1), (g_1, g_3, g_1, g_1), (g_2, g_3, g_2, g_1), (g_1, g_3, g_2, g_1), (g_1, g_3, g_1, g_2),
%$$
%$$
%(g_2, g_3, g_2, g_2), (g_2, g_3, g_1, g_2), (g_1, g_3, g_2, g_3).
%$$
\end{proposition}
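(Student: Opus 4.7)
The plan is to combine Waldinger's count of generators with an explicit spanning computation. By Proposition~\ref{kv}, $L^4(\racg_\sK)$ is annihilated by $2$, so it is a $\mathbb Z_2$-vector space; Proposition~\ref{numbergens3} says it admits a minimal generating set of cardinality $8$, so $\dim_{\mathbb Z_2} L^4(\racg_\sK) = 8$ and $L^4(\racg_\sK) \cong \mathbb Z_2^8$. Hence the entire task reduces to showing that the eight listed simple nested commutators span $L^4(\racg_\sK)$: once they span a space of dimension $8$, they automatically form a basis, and linear independence comes for free.

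By Proposition~\ref{comb}, $L^4(\racg_\sK)$ is generated (as a $\mathbb Z_2$-vector space) by the classes of length-$4$ simple nested commutators $[\mu_a, \mu_b, \mu_c, \mu_d]$ with $a \neq b$ and $a,b,c,d \in \{1,2,3\}$; after using the identity $[\mu_i,\mu_j] = [\mu_j,\mu_i]$ (antisymmetry in characteristic $2$) to force $a < b$, one is left with $3 \cdot 9 = 27$ candidate generators. I would rewrite each one as a $\mathbb Z_2$-combination of the listed eight using three tools: the Jacobi identity, which in characteristic $2$ takes the form $[\mu_a, \mu_b, \mu_c] = [\mu_b, \mu_c, \mu_a] + [\mu_a, \mu_c, \mu_b]$ and lets us change the leading pair; the $L^3$-relation $[\mu_i, \mu_j, \mu_i] = [\mu_i, \mu_j, \mu_j]$ coming from identity~\eqref{equalinRC}, together with its consequences after Lie-bracketing with a further $\mu_k$; and the quartic identity $[\mu_i, \mu_j, \mu_i, \mu_i] = [\mu_i, \mu_j, \mu_i, \mu_j]$ of Proposition~\ref{relationinL4}, which serves as the terminating relation at degree~$4$.

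The bulk of the work, and the main obstacle, is the enumeration: I would fix the leading pair $(a,b)$ with $a<b$ and run through the nine possibilities for $(c,d)$, checking that each reduction stays inside the proposed list. For $(a,b) = (1,2)$, Jacobi applied to any occurrence of $\mu_3$ among $c,d$ trades the commutator for pieces with leading pairs $(1,3)$ and $(2,3)$, leaving only $[\mu_1,\mu_2,\mu_1,\mu_1]$, which by Proposition~\ref{relationinL4} coincides with $[\mu_1,\mu_2,\mu_1,\mu_2]$ and, via the $L^3$-relation, also with $[\mu_1,\mu_2,\mu_2,\mu_1]$ and $[\mu_1,\mu_2,\mu_2,\mu_2]$. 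Analogous but messier case analyses for $(a,b) = (1,3)$ and $(2,3)$ should yield the remaining seven listed generators. The risk is that a Jacobi rewrite produces a commutator lying outside the list; that possibility must be eliminated by a second substitution in each such case. Once the spanning is verified, the dimension count from Proposition~\ref{numbergens3} finishes the proof without any additional independence argument.
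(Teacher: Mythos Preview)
Your overall strategy---use Waldinger's count to fix the dimension at $8$, then show the eight listed brackets span---is sound, and matches the paper's logic at the endgame. The gap is in the spanning step: the three tools you list are not enough to cut the generating set down to eight.

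Concretely, your tools are (i) the Jacobi identity, (ii) the degree-$3$ relation $[\mu_i,\mu_j,\mu_i]=[\mu_i,\mu_j,\mu_j]$ and its brackets $[\,\cdot\,,\mu_k]$, and (iii) the quartic identity of Proposition~\ref{relationinL4}. But (iii) is, as the proof of Proposition~\ref{relationinL4} shows, a formal consequence of (i) and (ii). So you are really working in the algebra
\[
F=FL\langle\mu_1,\mu_2,\mu_3\rangle\big/\bigl([\mu_i,\mu_j,\mu_i]=[\mu_i,\mu_j,\mu_j]\bigr).
\]
A direct multigraded count gives $\dim_{\mathbb Z_2} F^4=9$, not $8$: in the free Lie algebra $\dim FL^4=18$, and the degree-$4$ part of the ideal is spanned by the nine elements $[r_{ij},\mu_k]$ with $r_{ij}=[\mu_i,\mu_j,\mu_i]+[\mu_i,\mu_j,\mu_j]$, which one checks are linearly independent (for instance by expanding in the tensor algebra over $\mathbb Z_2$). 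Since $L^4(\racg_\sK)$ is a quotient of $F^4$ and has dimension $8$, there is exactly one further relation in $L^4(\racg_\sK)$ that does \emph{not} come from Jacobi and the $L^3$-relation. Your reduction will therefore stall at nine generators; the case split you describe for the leading pair $(a,b)$ cannot close.

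The missing relation is genuinely group-theoretic. In the paper it appears as the congruence
\[
(g_1,g_2,g_1,g_3)\equiv(g_2,g_3,g_2,g_1)\,(g_1,g_3,g_1,g_2)^{-1}\pmod{\gamma_5},
\]
obtained by applying the third Hall--Witt identity~\eqref{WH} to $(g_2,g_1,g_3)^2$ and invoking Proposition~\ref{kv} to kill the resulting squares modulo $\gamma_5$. In Lie-algebra terms this reads $[\mu_1,\mu_2,\mu_1,\mu_3]=[\mu_2,\mu_3,\mu_2,\mu_1]+[\mu_1,\mu_3,\mu_1,\mu_2]$, and one can check directly (again in the tensor algebra) that this identity is not a consequence of Jacobi together with the relations $[r_{ij},\mu_k]=0$. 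This is exactly the extra input the paper's proof supplies, and without it your plan cannot succeed. To repair the argument you must either import this relation from the group (via Hall--Witt and Proposition~\ref{kv}, as the paper does) or otherwise produce a degree-$4$ identity in $L(\racg_\sK)$ that goes beyond the defining relations of~$F$.
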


\begin{proof}
%Рассмотрим коммутаторы: $(g_i, g_j, g_i)=(g_j, g_i)g_i(g_i, g_j)g_i=g_jg_ig_jg_ig_ig_ig_jg_ig_jg_i=g_jg_ig_jg_ig_jg_ig_jg_i=(g_j, g_i)^2, (g_i, g_j, g_j)=(g_j, g_i)g_j(g_i, g_j)g_j=g_jg_ig_jg_ig_jg_ig_jg_ig_jg_j=g_jg_ig_jg_ig_jg_ig_jg_i=(g_j, g_i)^2$, то есть $(g_i, g_j, g_i)=(g_i, g_j, g_j)$.

%СТРОИМ ОБРАЗУЮЩИЕ ЧЛЕНА НЦР, А НЕ ГРАДУИРОВАННОЙ КОМПОНЕНТЫ ПАЛ!
Consider the group $\gamma_1(\racg_\sK) /\gamma_2(\racg_\sK)$. By proposition~\ref{comb} its generators are $g_1, g_2, g_3$. Hence, from the proposition~\ref{comb} and the identities~\eqref{WH}, we obtain that the generators of the group $\gamma_2(\racg_\sK) / \gamma_3(\racg_\sK)$ are commutators of the form $(g_i, g_j) $, where $i, j \in \{1, 2, 3\}$. Select a set of commutators $A = \{(g_1, g_2), (g_1, g_3), (g_2, g_3)\}$, it generates all $\gamma_2(\racg_\sK) / \gamma_3(\racg_\sK) $, since all others are excluded as their inverses. Similarly, the group $\gamma_3(\racg_\sK) / \gamma_4(\racg_\sK)$ is generated by commutators of the form $(z, g_i)$, where $z \in A, i \in \{1, 2, 3 \}$. Since $(g_i, g_j, g_i) = (g_i, g_j, g_j)$ (see~identity~\eqref{equalinRC}), we can eliminate three commutators, plus one more, using the identities~\eqref{WH }. From the proposition~\ref{comb}, the identities~\eqref{WH} and the theorem~\ref{LRCK} we get that the group $\gamma_3(\racg_\sK) / \gamma_4(\racg_\sK)$ has a minimal set from $5$ generators $(g_1, g_2, g_1)$, $(g_1, g_3, g_1)$, $(g_1, g_3, g_2)$, $(g_2, g_3, g_1)$, $(g_2, g_3, g_2)$. We get that a set of $15$ commutators
%а также ещё один - по модулю гамма_4! По факту мы можем отнего избавиться уже в гамма_4/гамма_5, но это надо как-то описать!
%МЫ ПО ФАКТУ СТРОИМ ПОРОЖДАЮЩИЕ ФАКТОРОВ (!!!), А НЕ ЧЛЕНОВ НЦР!!!
%Мы делаем верные действия, но их нужно обосновать через тождества В-Х, используя раскрытие коммутаторов и произведений
$$
(g_1, g_2, g_1, g_1), (g_1, g_3, g_1, g_1), (g_2, g_3, g_2, g_1), (g_1, g_3, g_2, g_1), (g_2, g_3, g_1, g_1),
$$
$$
(g_1, g_2, g_1, g_2), (g_1, g_3, g_1, g_2), (g_2, g_3, g_2, g_2), (g_1, g_3, g_2, g_2), (g_2, g_3, g_1, g_2),
$$
$$
(g_1, g_2, g_1, g_3), (g_1, g_3, g_1, g_3), (g_2, g_3, g_2, g_3), (g_1, g_3, g_2, g_3), (g_2, g_3, g_1, g_3)
$$
generates $\gamma_4(\racg_\sK) / \gamma_5(\racg_\sK)$, but the constructed set of generators $L^4(\racg_\sK)$, corresponding to it is not the minimum on the proposal~\ref{numbergens3}. In what follows, for convenience, we will use the notation $(g_{i_1}, g_{i_2}, \ldots, g_{i_k}) = (i_1i_2i_3\ldots i_k)$, and also instead of $\gamma_k(\racg_\sK)$ we will write $\gamma_k$.

We have $(ijik)=((ji)(ji), k)=(jik)(((ji),k),(ji))(jik)\equiv(jik)^2 \mod \gamma_5$, отсюда $(1211)\equiv(211)^2\mod\gamma_5, (1212)\equiv(212)^2\mod\gamma_5$. We get that $(1211)\equiv(1212)\mod\gamma_5$ due to equality $(iji)=(ijj)$. Similarly, $(1311)\equiv(1313)\mod\gamma_5$ и $(2322)\equiv(2323)\mod\gamma_5$.

Из каждого из первых трёх столбцов мы убрали по одному коммутатору.

Consider three commutators $(1213), (2321)$ и $(1312)$. Notice, that $(2321) \equiv (321)^2 \mod \gamma_5$ и $(1312) \equiv (312)^2 \mod \gamma_5$.

Consider $(1213) \equiv (213)^2 \mod \gamma_5 \equiv^{(1)} (321)^{-2}(132)^{-2} \mod \gamma_5 =$

$= (321)^{-2}(2, (13))^2 \equiv^{(2)} (321)^{-2}(312)^{-2} \mod \gamma_5 \equiv (2321)(1312)^{-1} \mod \gamma_5$. Here $(1)$ follows from the third formula~\eqref{WH}, which requires additional explanations: since by permuting commutators of length $2$ we get commutators of length $4$, they are not equal to $1$ modulo $\gamma_5$, but since we have a commutator square, after all the necessary transformations modulo $\gamma_8 \subset \gamma_5$ we get squares of commutators of length $4$, which means, by Proposition~\ref{kv}, they lie in $\gamma_5$ and are equal to $1$ modulo $\gamma_5$. Comparison modulo $(2)$ follows from the proposition~\ref{obrcom} for similar reasons. From here we can also remove one of the commutators from the ``side'' diagonal, leaving any two of $(1213), (2321), (1312)$. Remove $(1213)$.

Note that $(2313) \equiv (2133)(1323) \mod \gamma_5$, which follows from applying the third identity~\eqref{WH} for (231) inside the commutator and then expanding it using the second identity~\eqref {WH}. Consider
$$(2133) \equiv (3123)(2313) \mod \gamma_5 \equiv^{(1)} (3123)((31),(23))(2331) \mod \gamma_5 \equiv^{(2)}$$
$$\equiv^{(2)} ((23),(13))^{-1}(1332)^{-1}((31),(23))(2331) \mod \gamma_5 \equiv$$
$$\equiv (1332)^{-1}(2331)((13),(23))((31),(23)) \mod \gamma_5 \equiv^{(3)}$$
$$\equiv^{(3)} (1332)^{-1}(2331)((31),(23))^{-1}((31),(23)) \mod \gamma_5 \equiv (1332)^{-1}(2331) \mod \gamma_5,$$

where $(1)$ follows from the application of the third identity~\eqref{WH} for $(2313)$, $(2)$ follows from the application of the third identity~\eqref{WH} for $(3123)$, $(3 )$ follows from~\ref{obrcom}. Note that $(1312)=(1332)$ and $(2321)=(2331)$, hence $(2313) \equiv (1312)^{-1}(2321)(1323)\mod\gamma_5$, and therefore any of the commutators $(1323)$ and $(2313)$ can be removed.

Similarly, $(1321)\equiv(1231)(2311) \mod \gamma_5$. Consider
$$(1231)\equiv(3211)(1321) \mod \gamma_5\equiv(3211)((12), (13))(1312) \mod \gamma_5.$$
At the same time, it is true
$$(3211)\equiv(2311)^{-1} \mod \gamma_5\equiv(1321)^{-1}(2131)^{-1} \mod \gamma_5\equiv$$
$$\equiv(1321)^{-1}(1231) \mod \gamma_5\equiv(1321)^{-1}((13), (12))(1213) \mod \gamma_5.$$
From here we get
$$(3211)((12), (13))(1312)\equiv(1321)^{-1}((13), (12))(1213)((12), (13))(1312) \mod \gamma_5 \equiv$$
$$\equiv(1321)^{-1}(1213)(1312) \mod \gamma_5.$$
As a result, we got that $(1321)\equiv(1321)^{-1}(1213)(1312)(2311) \mod \gamma_5$, hence $(1321)^2\equiv(1213)(1312) (2311) \mod \gamma_5$, but $(1321)^2\equiv1\mod\gamma_5$, so $(1213)(1312)(2311)\equiv1\mod \gamma_5$. We get that $(2311)\equiv(1312)^{-1}(1213)^{-1}\mod\gamma_5$. Moreover, $(1213)$ was expressed above in terms of $(1312)$ and $(2321)$. So we can remove $(2311)$.

Now consider $(1322)\equiv(1232)(2312)\mod\gamma_5$. Further,
$$(1232)\equiv(3212)(1322)\mod\gamma_5\equiv(3212)(2312)(1232)\mod\gamma_5\equiv$$
$$\equiv(3212)((21), (23))(2321)((23),(12))(1223)\mod\gamma_5\equiv$$
$$\equiv(3212)((21),(23))((21),(23))(2321)(1223)\mod\gamma_5\equiv$$
$$\equiv(3212)(2321)(1223)\mod\gamma_5.$$
From here we get
$$(1322)\equiv(3212)(2321)(1223)(2312)\mod\gamma_5\equiv(2321)(1223)=(2321)(1213).$$
Moreover, $(1213)$ was expressed above in terms of $(1312)$ and $(2321)$. So we can remove $(1322)$.

As a result, we got a set of $8$ commutators
$$
(g_1, g_2, g_1, g_1), (g_1, g_3, g_1, g_1), (g_2, g_3, g_2, g_1), (g_1, g_3, g_2, g_1), \cancel{(g_2, g_3, g_1, g_1)},
$$
$$
\cancel{(g_1, g_2, g_1, g_2)}, (g_1, g_3, g_1, g_2), (g_2, g_3, g_2, g_2), \cancel{(g_1, g_3, g_2, g_2)}, (g_2, g_3, g_1, g_2),
$$
$$
\cancel{(g_1, g_2, g_1, g_3)}, \cancel{(g_1, g_3, g_1, g_3)}, \cancel{(g_2, g_3, g_2, g_3)}, (g_1, g_3, g_2, g_3), \cancel{(g_2, g_3, g_1, g_3)},
$$
which minimally generates $\gamma_4(\racg_\sK)$. Hence, according to the suggestion~\ref{numbergens3}, the set
$$
[\mu_1, \mu_2, \mu_1, \mu_1], [\mu_1, \mu_3, \mu_1, \mu_1], [\mu_2, \mu_3, \mu_2, \mu_1], [\mu_1, \mu_3, \mu_2, \mu_1], [\mu_1, \mu_3, \mu_1, \mu_2],
$$
$$
[\mu_2, \mu_3, \mu_2, \mu_2], [\mu_2, \mu_3, \mu_1, \mu_2], [\mu_1, \mu_3, \mu_2, \mu_3]
$$
minimally generates $L^4(\racg_\sK)$.
\end{proof}

\begin{corollary}\label{cor_freeproduct3}
Let $\sK$ be a discrete set of $3$-th points, i.e. $\racg_\sK = \mathbb Z_2\langle g_1 \rangle \ast \mathbb Z_2\langle g_2 \rangle \ast \mathbb Z_2\langle g_3 \rangle$. Then $L^4(\racg_\sK)\cong\mathbb Z_2^8$ and has the following minimum set of generators:
$$
[\mu_j, \mu_i, \mu_i, \mu_i], [\mu_k, \mu_i, \mu_i, \mu_i], [\mu_k, \mu_j, \mu_j, \mu_i], [\mu_k, \mu_i, \mu_j, \mu_i], [\mu_k, \mu_i, \mu_i, \mu_j],
$$
$$
[\mu_k, \mu_j, \mu_j, \mu_j], [\mu_k, \mu_j, \mu_i, \mu_j], [\mu_k, \mu_i, \mu_j, \mu_k],
$$
where $i, j, k$ are any distinct numbers from $\{1, 2, 3\}$.
\end{corollary}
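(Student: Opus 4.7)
The plan is to deduce this Corollary from Proposition~\ref{freeproduct3} by exploiting two symmetries: (i) the obvious $S_3$-action on $\racg_\sK = \mathbb Z_2\langle g_1\rangle \ast \mathbb Z_2\langle g_2\rangle \ast \mathbb Z_2\langle g_3\rangle$ permuting the generators, and (ii) the fact that $L(\racg_\sK)$ is a Lie algebra over $\mathbb Z_2$ by Proposition~\ref{kv}, so the antisymmetry $[x,y]=-[y,x]$ collapses to $[x,y]=[y,x]$ and hence $[x,y,u,v]=[y,x,u,v]$ for all entries of $L$.

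First I would handle the case $(i,j,k)=(1,2,3)$. Using the $\mathbb Z_2$-symmetry of the bracket, one checks termwise that each of the eight commutators in the Corollary's list coincides with the corresponding commutator in Proposition~\ref{freeproduct3}: for example $[\mu_j,\mu_i,\mu_i,\mu_i]=[\mu_2,\mu_1,\mu_1,\mu_1]=[\mu_1,\mu_2,\mu_1,\mu_1]$, $[\mu_k,\mu_j,\mu_j,\mu_i]=[\mu_3,\mu_2,\mu_2,\mu_1]=[\mu_2,\mu_3,\mu_2,\mu_1]$, and analogously for the remaining six entries. Thus in the identity case the two lists are literally the same.

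Next, for an arbitrary ordered triple $(i,j,k)$ of distinct elements of $\{1,2,3\}$, let $\sigma\in S_3$ be the unique permutation with $\sigma(1)=i$, $\sigma(2)=j$, $\sigma(3)=k$. Since $\sK$ is a discrete set, the defining relations $g_r^2=1$ are invariant under any relabeling of the generators, so $g_r\mapsto g_{\sigma(r)}$ extends to a group automorphism of $\racg_\sK$. This automorphism preserves the lower central series and hence descends to a graded Lie algebra automorphism $\sigma_*\colon L(\racg_\sK)\to L(\racg_\sK)$. Applying $\sigma_*$ to the eight generators of $L^4(\racg_\sK)$ produced by Proposition~\ref{freeproduct3} (equivalently, to the list from the Corollary at $(1,2,3)$ checked in the previous step) yields exactly the eight commutators stated in the Corollary for the triple $(i,j,k)$. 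Since a graded automorphism sends minimal generating sets of $L^4(\racg_\sK)$ to minimal generating sets, and Proposition~\ref{numbergens3} pins the minimal number of generators at $8$, the resulting list is a minimal generating set, which (together with Proposition~\ref{kv}) also forces $L^4(\racg_\sK)\cong\mathbb Z_2^{8}$.

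There is essentially no obstacle to overcome: once Proposition~\ref{freeproduct3} is in hand, the Corollary is a purely formal consequence of the $S_3$-symmetry of the group and the symmetry of the bracket mod~$2$. The only step requiring care is the termwise comparison at $(i,j,k)=(1,2,3)$, which is a routine swap of the first two entries of each nested commutator and involves no new commutator calculus beyond what already appears in the proof of Proposition~\ref{freeproduct3}.
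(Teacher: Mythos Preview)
Your proposal is correct and follows essentially the same approach as the paper: the paper's proof simply says that the result follows from Proposition~\ref{freeproduct3} together with the symmetry of the generators (your $S_3$-action) and the observation that in each commutator the first two entries can be swapped (your use of $[x,y]=[y,x]$ over $\mathbb Z_2$). Your write-up is more detailed in justifying both symmetries and in carrying out the termwise check at $(i,j,k)=(1,2,3)$, but the underlying argument is identical.
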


\begin{proof}
It follows from the proposition~\ref{freeproduct3} and the symmetry of the generators that in the indicated set one can assign different indices to different letters (in particular, $i = 1, j = 2, k = 3$), while in each commutator the first two elements can be swap. From this follows the proof.
\end{proof}

%Из тождеств~\eqref{WH} имеем, что $((12),(13))\equiv(2,((13),1))(1,(2,(13)))$. Отсюда, из второго тождества~\ref{comtogPVbM} и того, что $\gamma_6 \subset \gamma_5$, имеем
%$$
%(1312)\equiv((12),(13))(1321)\mod\gamma_5\equiv(1312)^{-1}(1,(2,(13)))(1321)\mod\gamma_5=
%$$
%$$
%=(1312)^{-1}(132)(1321)(2,(13))(1321)\equiv(1312)^{-1}(1321)^2\mod\gamma_5.
%$$

\begin{theorem}\label{commcox3}
Let $\sK$ be a simplicial complex on a set~$[3]$. Then
\begin{enumerate}[label=(\alph*)]
\item if in $\sK$ has all the edges $\{1, 2\}, \{1, 3\}, \{2, 3\}$, то $L^4(\racg_\sK) = \{e\}$, and there are no generators;
\item if in $\sK$ there are only $2$ edges $\{i, k\}, \{j, k\}$, where $i, j, k \in \{1, 2, 3\}$ and are pairwise distinct, and $i < j$, then $L^4(\racg_\sK) \cong \mathbb Z_2$ and is minimally generated by the element $[\mu_i, \mu_j, \mu_i, \mu_i]$;
\item  $\sK$ there is only $1$ edge $\{i, j\}$, where $i, j \in \{1, 2, 3\}, i < j$, then $L^4(\racg_\sK) \cong \mathbb Z_2^4$ and is minimally generated by four elements of the form
$$
[\mu_i, \mu_k, \mu_i, \mu_i], [\mu_k, \mu_j, \mu_k, \mu_k], [\mu_k, \mu_j, \mu_k, \mu_i], [\mu_k, \mu_j, \mu_i, \mu_k],
$$
where $k \neq i, k \neq j$;
\item if in $\sK$ no edges then $L^4(\racg_\sK) \cong \mathbb Z_2^8$ and is minimally generated by eight elements of the form
$$
[\mu_j, \mu_i, \mu_i, \mu_i], [\mu_k, \mu_i, \mu_i, \mu_i], [\mu_k, \mu_j, \mu_j, \mu_i], [\mu_k, \mu_i, \mu_j, \mu_i], [\mu_k, \mu_i, \mu_i, \mu_j],
$$
$$
[\mu_k, \mu_j, \mu_j, \mu_j], [\mu_k, \mu_j, \mu_i, \mu_j], [\mu_k, \mu_i, \mu_j, \mu_k],
$$
where $i, j, k$ are any distinct numbers from $\{1, 2, 3\}$.
\end{enumerate}
\end{theorem}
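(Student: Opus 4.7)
The plan is to prove the four cases of the theorem separately, with increasing effort.

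Cases (a) and (d) are essentially immediate. In case (a), all three edges are present, so $\racg_\sK$ collapses to the abelian group $\mathbb Z_2^3$; then $\gamma_2(\racg_\sK) = \{e\}$ and in particular $L^4(\racg_\sK) = \{e\}$. Case (d) is exactly the content of Corollary~\ref{cor_freeproduct3}, applied to the discrete simplicial complex on three points.

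For case (b), the crucial observation is that the vertex $k$ is a cone point of $\sK^1$: it is adjacent to both remaining vertices, so $g_k$ commutes with both $g_i$ and $g_j$ and is therefore central in $\racg_\sK$. This gives the splitting $\racg_\sK \cong (\mathbb Z_2\langle g_i\rangle \ast \mathbb Z_2\langle g_j\rangle) \times \mathbb Z_2\langle g_k\rangle$. Since the central direct factor contributes nothing to $\gamma_n$ for $n \geq 2$, we have $\gamma_n(\racg_\sK) = \gamma_n(\mathbb Z_2 \ast \mathbb Z_2)$ for $n \geq 2$, whence $L^4(\racg_\sK) \cong L^4(\mathbb Z_2 \ast \mathbb Z_2)$. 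The 2-generator case~\cite[Proposition~4.4]{ve3} then yields $L^4 \cong \mathbb Z_2$ with generator $[\mu_i, \mu_j, \mu_i, \mu_i]$.

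Case (c) is the substantive one. Here $\racg_\sK = (\mathbb Z_2\langle g_i\rangle \oplus \mathbb Z_2\langle g_j\rangle) \ast \mathbb Z_2\langle g_k\rangle$, and Proposition~\ref{numbergens3oneedge} already tells us that any minimal generating set of $L^4$ has exactly $4$ elements. So it is enough to exhibit $4$ generators, and the $4$ proposed elements will then automatically form a minimal generating set. The approach is to re-run the argument of Proposition~\ref{freeproduct3} in the presence of the extra relation $(g_i, g_j) = 1$. All reductions used there remain valid, since they relied only on the relations $g_\ell^2 = 1$ and the Hall--Witt identities, so we inherit the $8$ candidate generators of case (d). The new relation then collapses this list further: any commutator whose innermost bracket is $(g_i, g_j)$ vanishes (this kills $[\mu_j, \mu_i, \mu_i, \mu_i]$), and the Jacobi identity in the graded Lie algebra — i.e.\ Hall--Witt modulo $\gamma_5$, combined with $[\mu_i, \mu_j] = 0$ — identifies several pairs of survivors, e.g.\ $[\mu_k, \mu_j, \mu_j, \mu_i] = [\mu_k, \mu_j, \mu_i, \mu_j]$ and $[\mu_k, \mu_i, \mu_j, \mu_i] = [\mu_k, \mu_j, \mu_i, \mu_i]$, bringing the total down to $4$.

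The principal obstacle will be this last step in case (c): systematically checking which of the $8$ case-(d) generators drop out, which pairs merge, and that the survivors match the stated list (after using the symmetries of Corollary~\ref{cor_freeproduct3} and Proposition~\ref{relationinL4} to rewrite them in the displayed form). This is a careful but routine application of Jacobi under the relation $[\mu_i, \mu_j] = 0$, modeled on the reductions in the proof of Proposition~\ref{freeproduct3}. Once the $4$ survivors are shown to agree with the proposed generators, Proposition~\ref{numbergens3oneedge} closes the argument.
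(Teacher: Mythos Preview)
Your treatment of cases (a), (b), and (d) is fine. For (a) and (d) you do essentially what the paper does. For (b) you take a different but perfectly valid route: you observe that $g_k$ is central, split off the $\mathbb Z_2\langle g_k\rangle$ factor, and reduce to the known two-generator case. The paper instead just notes that seven of the eight generators from Proposition~\ref{freeproduct3} contain $[\mu_1,\mu_3]$ or $[\mu_2,\mu_3]$ in the innermost slot and hence vanish, leaving one. Your argument is slightly more conceptual; the paper's is more uniform with the other cases.

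Case (c) is where your plan has a genuine gap. You propose to take the eight generators of Corollary~\ref{cor_freeproduct3} with the letters $i,j,k$ assigned so that $\{i,j\}$ is the edge, kill the single one whose innermost bracket is $[\mu_i,\mu_j]$, and then use Jacobi together with $[\mu_i,\mu_j]=0$ to merge the remaining seven down to four. But Jacobi only gives you $[A,\mu_i,\mu_j]=[A,\mu_j,\mu_i]$ and $[\mu_k,\mu_i,\mu_j]=[\mu_k,\mu_j,\mu_i]$; applying these systematically to the seven survivors yields \emph{five} classes, not four:
\[
[\mu_k,\mu_i,\mu_i,\mu_i],\quad
[\mu_k,\mu_j,\mu_j,\mu_j],\quad
[\mu_k,\mu_j,\mu_j,\mu_i]=[\mu_k,\mu_j,\mu_i,\mu_j],\quad
[\mu_k,\mu_i,\mu_j,\mu_i]=[\mu_k,\mu_i,\mu_i,\mu_j],\quad
[\mu_k,\mu_i,\mu_j,\mu_k].
\]
The fourth class here is \emph{not} among the four generators in the statement, and eliminating it requires a further relation in $L^4(\racg_\sK)$ that does not follow from Jacobi and the degree-$\le 3$ relations alone --- it is exactly the kind of extra group-level identity (of the sort derived in the proof of Proposition~\ref{freeproduct3}, e.g.\ the expression of $(1322)$ or $(2311)$ in terms of others) that makes $L(\racg_\sK)$ strictly smaller than the graph Lie algebra over~$\mathbb Z_2$.

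The paper sidesteps this completely. It does \emph{not} fix the letters and then reduce; instead it uses the freedom in Corollary~\ref{cor_freeproduct3} to relabel so that the edge becomes $\{1,3\}$. In the \emph{specific} list of Proposition~\ref{freeproduct3}, exactly four of the eight generators have $[\mu_1,\mu_3]$ as their innermost bracket, so those four vanish outright and the remaining four
\[
[\mu_1,\mu_2,\mu_1,\mu_1],\ [\mu_2,\mu_3,\mu_2,\mu_2],\ [\mu_2,\mu_3,\mu_2,\mu_1],\ [\mu_2,\mu_3,\mu_1,\mu_2]
\]
already generate. Proposition~\ref{numbergens3oneedge} then gives minimality, and symmetry handles the other edges. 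No Jacobi reductions are needed at all. If you want to salvage your approach, you must either supply the missing group-level relation explicitly, or --- much more simply --- exploit the relabeling as the paper does.
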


\begin{proof}
And suggestions~\ref{freeproduct3} for all items we have that set
$$
[\mu_1, \mu_2, \mu_1, \mu_1], [\mu_1, \mu_3, \mu_1, \mu_1], [\mu_2, \mu_3, \mu_2, \mu_1], [\mu_1, \mu_3, \mu_2, \mu_1], [\mu_1, \mu_3, \mu_1, \mu_2],
$$
$$
[\mu_2, \mu_3, \mu_2, \mu_2], [\mu_2, \mu_3, \mu_1, \mu_2], [\mu_1, \mu_3, \mu_2, \mu_3]
$$
generates $L^4(\racg_\sK)$ (not necessarily minimal, moreover, never minimally generates).

For item (a), in the presence of all edges, all elements of the set turn into $0$, whence the required result follows.

For item (b), consider the case when there are edges $\{1, 3\}$ and $\{2, 3\}$ (that is, $i = 1, j = 2, k = 3$). In this case, all commutators turn to $0$, except for $[\mu_1, \mu_2, \mu_1, \mu_1]$, which implies that the item is true for this case. Since it is always possible to change the numbering of vertices in any way, then for any two edges $L^4(\racg_\sK) \cong \mathbb Z_2$, and one can leave any non-zero commutator from the given set, which implies the desired.

For item (c), consider the case when $\sK$ has a single edge $\{1, 3\}$. Then there is a set of commutators
$$
[\mu_1, \mu_2, \mu_1, \mu_1], [\mu_2, \mu_3, \mu_2, \mu_2], [\mu_2, \mu_3, \mu_2, \mu_1], [\mu_2, \mu_3, \mu_1, \mu_2].
$$
This set is minimal according to suggestion~\ref{numbergens3oneedge}.

For simplicial complexes with a different edge, the argument is similar to the proof of Corollary~\ref{cor_freeproduct3}.

Item (d) follows from corollary~\ref{cor_freeproduct3}.
\end{proof}

\begin{proposition}\label{freeproduct4}
Let $\sK$ be a discrete set of $4$-th points, i.e. $\racg_\sK = \mathbb Z_2\langle g_1 \rangle \ast \mathbb Z_2\langle g_2 \rangle \ast \mathbb Z_2\langle g_3 \rangle \ast \mathbb Z_2\langle g_4 \rangle$. Then $L^4(\racg_\sK)\cong\mathbb Z_2^{32}$ has a minimal set of generators $\overline{A_1}\cup \overline{A_2}\cup \overline{A_3}\cup \overline{A_4}\cup \overline{B}$, where
$$
\overline{A_1} = \{ [\mu_3, \mu_2, \mu_2, \mu_1], [\mu_3, \mu_1, \mu_2, \mu_1], [\mu_3, \mu_1, \mu_1, \mu_2], [\mu_3, \mu_2, \mu_1, \mu_2], [\mu_3, \mu_1, \mu_2, \mu_3] \}.
$$
$$
\overline{A_2} = \{ [\mu_2, \mu_1, \mu_1, \mu_1], [\mu_4, \mu_2, \mu_2, \mu_1], [\mu_4, \mu_1, \mu_2, \mu_1],
$$
$$
[\mu_4, \mu_1, \mu_1, \mu_2], [\mu_4, \mu_2, \mu_1, \mu_2], [\mu_4, \mu_1, \mu_2, \mu_4] \},
$$
$$
\overline{A_3} = \{ [\mu_3, \mu_1, \mu_1, \mu_1], [\mu_4, \mu_1, \mu_1, \mu_1], [\mu_4, \mu_3, \mu_3, \mu_1], [\mu_4, \mu_1, \mu_3, \mu_1],
$$
$$
[\mu_4, \mu_1, \mu_1, \mu_3], [\mu_4, \mu_3, \mu_1, \mu_3], [\mu_4, \mu_1, \mu_3, \mu_4] \},
$$
$$
\overline{A_4} = \{ [\mu_3, \mu_2, \mu_2, \mu_2], [\mu_4, \mu_2, \mu_2, \mu_2], [\mu_4, \mu_3, \mu_3, \mu_2], [\mu_4, \mu_2, \mu_3, \mu_2],
$$
$$
[\mu_4, \mu_2, \mu_2, \mu_3], [\mu_4, \mu_3, \mu_3, \mu_3], [\mu_4, \mu_3, \mu_2, \mu_3], [\mu_4, \mu_2, \mu_3, \mu_4] \},
$$
$$
\overline{B} = \{ [\mu_2, \mu_4, \mu_3, \mu_1], [\mu_1, \mu_4, \mu_3, \mu_2], [\mu_1, \mu_4, \mu_2, \mu_3], [\mu_2, \mu_4, \mu_1, \mu_3],
$$
$$
[\mu_3, \mu_4, \mu_1, \mu_2], [\mu_3, \mu_4, \mu_2, \mu_1] \}.
$$
\end{proposition}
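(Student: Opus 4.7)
The plan is to mirror the proof of Proposition~\ref{freeproduct3} but now on four generators, with an additional block of genuinely four-letter commutators to analyze. By Proposition~\ref{comb} the quotient $\gamma_4(\racg_\sK)/\gamma_5(\racg_\sK)$ is generated by all simple nested commutators of length four in $g_1,g_2,g_3,g_4$. Normalizing the innermost pair to satisfy $i<j$ via $(g_i,g_j)^{-1}=(g_j,g_i)$ leaves $6\cdot 16=96$ initial candidates $(g_i,g_j,g_k,g_l)$, and the job is to whittle these down to the $32$ listed.

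First, I would apply the three reductions already used for $m=3$, valid modulo $\gamma_5$: the identity $(g_i,g_j,g_i,g_l)\equiv (g_i,g_j,g_l)^2$ following from $(iji)=(ji)^2$; the relation $[\mu_i,\mu_j,\mu_i,\mu_i]=[\mu_i,\mu_j,\mu_i,\mu_j]$ of Proposition~\ref{relationinL4}; and the Hall--Witt identities, which by Corollary~\ref{comtogPVbM} become clean Jacobi-type congruences modulo $\gamma_5$ because squares of length-three commutators die in $L^4$ by Proposition~\ref{kv}. These reductions already cut the count drastically.

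Next, I would exploit the four retractions $\racg_\sK\twoheadrightarrow \racg_{\sK_J}$ obtained by setting $g_l\mapsto 1$ for $l\notin J$, one for each three-element $J\subset\{1,2,3,4\}$. Each retraction is compatible with the lower central series and induces a surjection on $L^4$. By Corollary~\ref{cor_freeproduct3} each image has an explicit $8$-element minimum generating set. Lifting these generators and using a suitable partition of the single-index commutators $[\mu_s,\mu_r,\mu_r,\mu_r]$ across the four three-subsets (so that none is counted twice) yields precisely the blocks $\overline{A_1},\overline{A_2},\overline{A_3},\overline{A_4}$, of respective sizes $5,6,7,8$.

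It remains to handle commutators in which all four generators appear, i.e.\ $\{i,j,k,l\}=\{1,2,3,4\}$. Using Corollary~\ref{comtogPVbM2t} and Proposition~\ref{obrcom} I would reduce the twelve orderings (after the innermost normalization $i<j$) to the six representatives of $\overline{B}$, by the analogue of the manipulation $(1213)\equiv(2321)(1312)^{-1}$ in the proof of Proposition~\ref{freeproduct3}. The main obstacle is bookkeeping: each reduction must be justified by an explicit Jacobi-type congruence modulo $\gamma_5$, and there are substantially more cross-terms than in the three-generator proof, including the genuinely four-index reductions that have no analogue there. Once all reductions are verified, counting $5+6+7+8+6=32$ matches the minimum granted by Proposition~\ref{numbergens4}, so the displayed set must be a minimum generating set of $L^4(\racg_\sK)$.
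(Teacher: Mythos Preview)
Your plan is the paper's proof in outline: the paper also partitions the length-four commutators by index support, applies Corollary~\ref{cor_freeproduct3} to each three-index block (your retraction language is a clean way to say this, but the content is identical), performs the same transfers to avoid double counting, and then treats the four-index block $B$ separately before invoking Proposition~\ref{numbergens4} to certify minimality.

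The one place where your sketch understates the work is the reduction of $B$. The Jacobi-type congruences you cite, applied ``by analogy with $(1213)\equiv(2321)(1312)^{-1}$'', only take the twelve normalized four-index commutators down to eight --- equivalently, this is just passing to the $L^3$ basis first, which is where the paper starts $B$. The two further reductions from eight to six are not of this type: they require routing through commutators of two length-two commutators, e.g.\ $((g_3,g_4),(g_1,g_2))$, and reexpressing those in nested form. Concretely the paper derives
\[
(2314)\equiv(1324)(2413)(1423)^{-1}(3412)(3421)^{-1},\qquad
(3412)\equiv(1432)(2431)(2413)^{-1}(1324)^{-1}
\]
modulo $\gamma_5$, each via an intermediate step such as $(34,12)\equiv(3412)(3421)^{-1}$. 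Your toolbox (Hall--Witt, Corollary~\ref{comtogPVbM}, Proposition~\ref{obrcom}, Proposition~\ref{kv}) is sufficient to carry this out, but the computations are genuinely different from the three-generator analogues and must be done explicitly; once you do, the count $5+6+7+8+6=32$ closes the argument exactly as you say.
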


\begin{proof}
Consider commutators: $(g_i, g_j, g_i)=(g_j, g_i)g_i(g_i, g_j)g_i=g_jg_ig_jg_ig_ig_ig_jg_ig_jg_i=g_jg_ig_jg_ig_jg_ig_jg_i=(g_j, g_i)^2, (g_i, g_j, g_j)=(g_j, g_i)g_j(g_i, g_j)g_j=g_jg_ig_jg_ig_jg_ig_jg_ig_jg_j=g_jg_ig_jg_ig_jg_ig_jg_i=(g_j, g_i)^2$, that is $(g_i, g_j, g_i)=(g_i, g_j, g_j)$.

By Theorem~\ref{LRCK} the group $\gamma_3(\racg_\sK)/\gamma_4(\racg_\sK)$ is minimally generated by commutators
$$
(g_1, g_2, g_2), (g_1, g_3, g_3), (g_1, g_4, g_4), (g_2, g_3, g_3), (g_2, g_4, g_4), (g_3, g_4, g_4),
$$
$$
(g_1, g_3, g_2), (g_1, g_4, g_2), (g_1, g_4, g_3), (g_2, g_4, g_1), (g_2, g_4, g_3),
$$
$$
(g_3, g_4, g_1), (g_3, g_4, g_2), (g_2, g_3, g_1).
$$
Hence, the group $\gamma_4(\racg_\sK)/\gamma_5(\racg_\sK)$ is generated (minimally) by commutators of the form $(z, g_i)$, where $z$ is the commutator from the set above and $i \in \{1, 2, 3, 4\}$.

Let us write them out, dividing them into groups as follows: in the group $A_1$ there will be commutators with indices $\{1, 2, 3\}$, in the group $A_2$ there will be all commutators with indices $\{1, 2, 4\} $, not included in $A_1$, in the group $A_3$ there will be commutators with indices $\{1, 3, 4\}$, not included in $A_1 \cup A_2$, in the group $A_4$ there will be commutators with indices $ \{2, 3, 4\}$ not included in $A_1 \cup A_2 \cup A_3$, in group B there will be commutators with $4$-indexes:
$$
A_1 =\{ (g_1, g_2, g_2, g_1), (g_1, g_2, g_2, g_2), (g_1, g_3, g_2, g_3), (g_2, g_3, g_3, g_3), (g_2, g_3, g_1, g_3),
$$
$$
(g_1, g_3, g_3, g_1), (g_2, g_3, g_3, g_1), (g_1, g_2, g_2, g_3), (g_1, g_3, g_2, g_1), (g_2, g_3, g_1, g_1),
$$
$$
(g_1, g_3, g_3, g_2), (g_2, g_3, g_3, g_2), (g_1, g_3, g_2, g_2), (g_1, g_3, g_3, g_3), (g_2, g_3, g_1, g_2) \},
$$
$$
A_2 = \{ (g_1, g_2, g_2, g_4), (g_1, g_4, g_4, g_1), (g_2, g_4, g_4, g_1), (g_1, g_4, g_2, g_1), (g_2, g_4, g_1, g_1),
$$
$$
(g_1, g_4, g_4, g_2), (g_2, g_4, g_4, g_2), (g_1, g_4, g_2, g_4), (g_2, g_4, g_4, g_4), (g_1, g_4, g_2, g_2),
$$
$$
(g_2, g_4, g_1, g_2), (g_2, g_4, g_1, g_4), (g_1, g_4, g_4, g_4) \},
$$
$$
A_3 = \{ (g_1, g_4, g_4, g_3), (g_3, g_4, g_4, g_3), (g_1, g_4, g_3, g_3), (g_3, g_4, g_4, g_1), (g_1, g_4, g_3, g_1),
$$
$$
(g_1, g_4, g_3, g_4), (g_1, g_3, g_3, g_4), (g_3, g_4, g_4, g_4), (g_3, g_4, g_1, g_1), (g_3, g_4, g_1, g_3),
$$
$$
(g_3, g_4, g_1, g_4) \},
$$
$$
A_4 = \{ (g_2, g_4, g_3, g_4), (g_3, g_4, g_4, g_2), (g_2, g_4, g_3, g_2), (g_2, g_4, g_4, g_3), (g_2, g_4, g_3, g_3),
$$
$$
(g_2, g_3, g_3, g_4), (g_3, g_4, g_2, g_2), (g_3, g_4, g_2, g_3), (g_3, g_4, g_2, g_4) \},
$$
$$
B = \{ (g_2, g_4, g_3, g_1), (g_1, g_4, g_3, g_2), (g_1, g_4, g_2, g_3), (g_2, g_4, g_1, g_3), (g_1, g_3, g_2, g_4),
$$
$$
(g_3, g_4, g_1, g_2), (g_3, g_4, g_2, g_1), (g_2, g_3, g_1, g_4) \}.
$$
Since $A_1$ contains only commutators with indices $1, 2$ and $3$, then all commutators from this set can be replaced by commutators from Corollary~\ref{cor_freeproduct3} (we take $i = 1, j = 2, k = 3 $), getting the set $A_1^{(1)}$:
$$
A_1^{(1)} = \{ (g_2, g_1, g_1, g_1), (g_3, g_1, g_1, g_1), (g_3, g_2, g_2, g_1), (g_3, g_1, g_2, g_1),
$$
$$
(g_3, g_1, g_1, g_2), (g_3, g_2, g_2, g_2), (g_3, g_2, g_1, g_2), (g_3, g_1, g_2, g_3) \}.
$$
Now we transfer to $A_2$ commutators without index $3$ from $A_1^{(1)}$ and do the same, getting $A_2^{(1)}$ (take $i = 1, j = 2, k = 4$). The resulting new set of commutators from $A_1^{(1)}$ will be denoted by $A_1^{(2)}$:
$$
A_1^{(2)} = \{ (g_3, g_1, g_1, g_1), (g_3, g_2, g_2, g_1), (g_3, g_1, g_2, g_1),
$$
$$
(g_3, g_1, g_1, g_2), (g_3, g_2, g_2, g_2), (g_3, g_2, g_1, g_2), (g_3, g_1, g_2, g_3) \}.
$$
$$
A_2^{(1)} = \{ (g_2, g_1, g_1, g_1), (g_4, g_1, g_1, g_1), (g_4, g_2, g_2, g_1), (g_4, g_1, g_2, g_1),
$$
$$
(g_4, g_1, g_1, g_2), (g_4, g_2, g_2, g_2), (g_4, g_2, g_1, g_2), (g_4, g_1, g_2, g_4) \}.
$$
Similarly, we transfer commutators from $A_1^{(2)}$ and from $A_2^{(1)}$ without index $2$ to $A_3$, then we make a replacement, getting $A_3^{(1)}$ (we take $i = 1, j = 3, k = 4$). The resulting new sets will be denoted by $A_1^{(3)}$ and $A_2^{(2)}$:
$$
A_1^{(3)} = \{ (g_3, g_2, g_2, g_1), (g_3, g_1, g_2, g_1),
$$
$$
(g_3, g_1, g_1, g_2), (g_3, g_2, g_2, g_2), (g_3, g_2, g_1, g_2), (g_3, g_1, g_2, g_3) \}.
$$
$$
A_2^{(2)} = \{ (g_2, g_1, g_1, g_1), (g_4, g_2, g_2, g_1), (g_4, g_1, g_2, g_1),
$$
$$
(g_4, g_1, g_1, g_2), (g_4, g_2, g_2, g_2), (g_4, g_2, g_1, g_2), (g_4, g_1, g_2, g_4) \},
$$
$$
A_3^{(1)} = \{ (g_3, g_1, g_1, g_1), (g_4, g_1, g_1, g_1), (g_4, g_3, g_3, g_1), (g_4, g_1, g_3, g_1),
$$
$$
(g_4, g_1, g_1, g_3), (g_4, g_3, g_3, g_3), (g_4, g_3, g_1, g_3), (g_4, g_1, g_3, g_4) \}.
$$
Next, we transfer to $A_4$ commutators without index $1$ from $A_i^{(j)}, i, j > 0, i + j = 4$, and then we make a similar replacement, obtaining $A_4^{(1)}$ (we take $i = 2, j = 3, k = 4$). The resulting new sets will be denoted by $A_1^{(4)}$, $A_2^{(3)}$ and $A_3^{(2)}$ (and additionally, as indicated below):
$$
A_1' = A_1^{(4)} = \{ (g_3, g_2, g_2, g_1), (g_3, g_1, g_2, g_1),
$$
$$
(g_3, g_1, g_1, g_2), (g_3, g_2, g_1, g_2), (g_3, g_1, g_2, g_3) \}.
$$
$$
A_2' = A_2^{(3)} = \{ (g_2, g_1, g_1, g_1), (g_4, g_2, g_2, g_1), (g_4, g_1, g_2, g_1),
$$
$$
(g_4, g_1, g_1, g_2), (g_4, g_2, g_1, g_2), (g_4, g_1, g_2, g_4) \},
$$
$$
A_3' = A_3^{(2)} = \{ (g_3, g_1, g_1, g_1), (g_4, g_1, g_1, g_1), (g_4, g_3, g_3, g_1), (g_4, g_1, g_3, g_1),
$$
$$
(g_4, g_1, g_1, g_3), (g_4, g_3, g_1, g_3), (g_4, g_1, g_3, g_4) \},
$$
$$
A_4' = A_4^{(1)} = \{ (g_3, g_2, g_2, g_2), (g_4, g_2, g_2, g_2), (g_4, g_3, g_3, g_2), (g_4, g_2, g_3, g_2),
$$
$$
(g_4, g_2, g_2, g_3), (g_4, g_3, g_3, g_3), (g_4, g_3, g_2, g_3), (g_4, g_2, g_3, g_4) \}.
$$
The set $A_1'\cup A_2'\cup A_3'\cup A_4'\cup B$, consisting of $34$ commutators, generates (according to the proposition~\ref{numbergens4}, non-minimal) the group $\gamma_4(\racg_\sK) /\gamma_5(\racg_\sK)$. Further in this proof, for convenience, we will use the notation $(g_{i_1}, g_{i_2}, \ldots, g_{i_k}) = (i_1i_2i_3\ldots i_k)$, and instead of $\gamma_k(\racg_\sK) $ will be written $\gamma_k$, and all modulo comparisons will default to $\gamma_5$, unless otherwise specified.

Consider $(2314)^{-1}\equiv(3214)\equiv(1234)(1324)^{-1}$. Note that $(1423\equiv(1243)(2413)\equiv(34,12)(1234)(2413)\equiv^{(1)}(3412)(3421)^{-1}(1234)( 2413)$, where $(1)$ follows from the fact that $(3412)\equiv(21,34)(3421)$ From this we get that $(1234)\equiv(3421)(3412)^{- 1}(1423)(2413)^{-1}$, which means
$$
(2314)\equiv(1324)(1234)^{-1}\equiv(1324)(2413)(1423)^{-1}(3412)(3421)^{-1}.
$$
We get that $(2314)$ can be removed, since it is expressed in terms of other commutators.

Since $(2431)\equiv(13,24)(2413)$, then $(24,31)\equiv(2431)(2413)^{-1}$. Hence we get that
$$
(3412)\equiv(1432)(3142)\equiv(1432)(24,31)(3124)\equiv(1432)(2431)(2413)^{-1}(1324)^{-1}.
$$
This means that the commutator $(1324)$ can be removed, since it is expressed in terms of other commutators. We get commutators:
$$
A_1' = \{ (g_3, g_2, g_2, g_1), (g_3, g_1, g_2, g_1), (g_3, g_1, g_1, g_2), (g_3, g_2, g_1, g_2), (g_3, g_1, g_2, g_3) \}.
$$
$$
A_2' = \{ (g_2, g_1, g_1, g_1), (g_4, g_2, g_2, g_1), (g_4, g_1, g_2, g_1),
$$
$$
(g_4, g_1, g_1, g_2), (g_4, g_2, g_1, g_2), (g_4, g_1, g_2, g_4) \},
$$
$$
A_3' = \{ (g_3, g_1, g_1, g_1), (g_4, g_1, g_1, g_1), (g_4, g_3, g_3, g_1), (g_4, g_1, g_3, g_1),
$$
$$
(g_4, g_1, g_1, g_3), (g_4, g_3, g_1, g_3), (g_4, g_1, g_3, g_4) \},
$$
$$
A_4' = \{ (g_3, g_2, g_2, g_2), (g_4, g_2, g_2, g_2), (g_4, g_3, g_3, g_2), (g_4, g_2, g_3, g_2),
$$
$$
(g_4, g_2, g_2, g_3), (g_4, g_3, g_3, g_3), (g_4, g_3, g_2, g_3), (g_4, g_2, g_3, g_4) \}.
$$
$$
B = \{ (g_2, g_4, g_3, g_1), (g_1, g_4, g_3, g_2), (g_1, g_4, g_2, g_3), (g_2, g_4, g_1, g_3),
$$
$$
(g_3, g_4, g_1, g_2), (g_3, g_4, g_2, g_1) \}.
$$
The resulting collection contains $32$ commutators, which, by the proposition~\ref{numbergens4}, minimally generates the group $\gamma_4/\gamma_5$. It follows from this that the set $\overline{A_1}\cup \overline{A_2}\cup \overline{A_3}\cup \overline{A_4}\cup \overline{B}$, where
$$
\overline{A_1} = \{ [\mu_3, \mu_2, \mu_2, \mu_1], [\mu_3, \mu_1, \mu_2, \mu_1], [\mu_3, \mu_1, \mu_1, \mu_2], [\mu_3, \mu_2, \mu_1, \mu_2], [\mu_3, \mu_1, \mu_2, \mu_3] \}.
$$
$$
\overline{A_2} = \{ [\mu_2, \mu_1, \mu_1, \mu_1], [\mu_4, \mu_2, \mu_2, \mu_1], [\mu_4, \mu_1, \mu_2, \mu_1],
$$
$$
[\mu_4, \mu_1, \mu_1, \mu_2], [\mu_4, \mu_2, \mu_1, \mu_2], [\mu_4, \mu_1, \mu_2, \mu_4] \},
$$
$$
\overline{A_3} = \{ [\mu_3, \mu_1, \mu_1, \mu_1], [\mu_4, \mu_1, \mu_1, \mu_1], [\mu_4, \mu_3, \mu_3, \mu_1], [\mu_4, \mu_1, \mu_3, \mu_1],
$$
$$
[\mu_4, \mu_1, \mu_1, \mu_3], [\mu_4, \mu_3, \mu_1, \mu_3], [\mu_4, \mu_1, \mu_3, \mu_4] \},
$$
$$
\overline{A_4} = \{ [\mu_3, \mu_2, \mu_2, \mu_2], [\mu_4, \mu_2, \mu_2, \mu_2], [\mu_4, \mu_3, \mu_3, \mu_2], [\mu_4, \mu_2, \mu_3, \mu_2],
$$
$$
[\mu_4, \mu_2, \mu_2, \mu_3], [\mu_4, \mu_3, \mu_3, \mu_3], [\mu_4, \mu_3, \mu_2, \mu_3], [\mu_4, \mu_2, \mu_3, \mu_4] \},
$$
$$
\overline{B} = \{ [\mu_2, \mu_4, \mu_3, \mu_1], [\mu_1, \mu_4, \mu_3, \mu_2], [\mu_1, \mu_4, \mu_2, \mu_3], [\mu_2, \mu_4, \mu_1, \mu_3],
$$
$$
[\mu_3, \mu_4, \mu_1, \mu_2], [\mu_3, \mu_4, \mu_2, \mu_1] \},
$$
minimally generates $L^4(\racg_\sK)$.
\end{proof}

this implies

\begin{theorem}\label{theorem_freeproduct4}
Let $\sK$ be a discrete set of $4$-th points, i.e. $\racg_\sK = \mathbb Z_2\langle g_1 \rangle \ast \mathbb Z_2\langle g_2 \rangle \ast \mathbb Z_2\langle g_3 \rangle \ast \mathbb Z_2\langle g_4 \rangle$. Then $L^4(\racg_\sK)\cong\mathbb Z_2^{32}$ has a minimal set of generators $\overline{A_i}\cup \overline{A_j}\cup \overline{A_k}\cup \overline{A_l}\cup \overline{B}$, where
$$
\overline{A_i} = \{ [\mu_k, \mu_j, \mu_j, \mu_i], [\mu_k, \mu_i, \mu_j, \mu_i], [\mu_k, \mu_i, \mu_i, \mu_j], [\mu_k, \mu_j, \mu_i, \mu_j], [\mu_k, \mu_i, \mu_j, \mu_k] \}.
$$
$$
\overline{A_j} = \{ [\mu_j, \mu_i, \mu_i, \mu_i], [\mu_l, \mu_j, \mu_j, \mu_i], [\mu_l, \mu_i, \mu_j, \mu_i], [\mu_l, \mu_i, \mu_i, \mu_j], [\mu_l, \mu_j, \mu_i, \mu_j],
$$
$$
[\mu_l, \mu_i, \mu_j, \mu_l] \},
$$
$$
\overline{A_k} = \{ [\mu_k, \mu_i, \mu_i, \mu_i], [\mu_l, \mu_i, \mu_i, \mu_i], [\mu_l, \mu_k, \mu_k, \mu_i], [\mu_l, \mu_i, \mu_k, \mu_i], [\mu_l, \mu_i, \mu_i, \mu_k],
$$
$$
[\mu_l, \mu_k, \mu_i, \mu_k], [\mu_l, \mu_i, \mu_k, \mu_l] \},
$$
$$
\overline{A_l} = \{ [\mu_k, \mu_j, \mu_j, \mu_j], [\mu_l, \mu_j, \mu_j, \mu_j], [\mu_l, \mu_k, \mu_k, \mu_j], [\mu_l, \mu_j, \mu_k, \mu_j], [\mu_l, \mu_j, \mu_j, \mu_k],
$$
$$
[\mu_l, \mu_k, \mu_k, \mu_k], [\mu_l, \mu_k, \mu_j, \mu_k], [\mu_l, \mu_j, \mu_k, \mu_l] \},
$$
$$
\overline{B} = \{ [\mu_j, \mu_l, \mu_k, \mu_i], [\mu_i, \mu_l, \mu_k, \mu_j], [\mu_i, \mu_l, \mu_j, \mu_k], [\mu_j, \mu_l, \mu_i, \mu_k], [\mu_k, \mu_l, \mu_i, \mu_j],
$$
$$
[\mu_k, \mu_l, \mu_j, \mu_i] \}.
$$
\end{theorem}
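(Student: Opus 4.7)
The plan is to deduce Theorem~\ref{theorem_freeproduct4} directly from Proposition~\ref{freeproduct4} via the permutation symmetry of the free product, in exact analogy with how Corollary~\ref{cor_freeproduct3} was derived from Proposition~\ref{freeproduct3}. No new commutator manipulations are needed.

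First, I would note that for a discrete simplicial complex on four points, $\racg_\sK = \mathbb Z_2\langle g_1\rangle \ast \mathbb Z_2\langle g_2\rangle \ast \mathbb Z_2\langle g_3\rangle \ast \mathbb Z_2\langle g_4\rangle$ is defined by symmetric relations (only $g_n^2 = 1$), so any permutation $\sigma$ of $\{1,2,3,4\}$ induces a group automorphism $\Phi_\sigma$ of $\racg_\sK$ sending $g_n \mapsto g_{\sigma(n)}$. Each $\Phi_\sigma$ respects the lower central series, hence descends to a graded Lie algebra automorphism of $L(\racg_\sK)$, and in particular to a linear automorphism of $L^4(\racg_\sK) \cong \mathbb Z_2^{32}$. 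Such an automorphism carries minimal generating sets to minimal generating sets.

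Second, I would fix distinct $i, j, k, l \in \{1,2,3,4\}$ and apply the automorphism induced by the permutation $\sigma: 1 \mapsto i,\ 2 \mapsto j,\ 3 \mapsto k,\ 4 \mapsto l$ to the minimal generating set $\overline{A_1}\cup \overline{A_2}\cup \overline{A_3}\cup \overline{A_4}\cup \overline{B}$ given in Proposition~\ref{freeproduct4}. Term-by-term inspection shows that the images of $\overline{A_1}, \overline{A_2}, \overline{A_3}, \overline{A_4}, \overline{B}$ under $\Phi_\sigma$ are precisely the sets $\overline{A_i}, \overline{A_j}, \overline{A_k}, \overline{A_l}, \overline{B}$ written in the statement of Theorem~\ref{theorem_freeproduct4}. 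Since $\Phi_\sigma$ preserves minimality, this yields a minimal generating set of $L^4(\racg_\sK)$ of $32$ elements.

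There is essentially no obstacle here; the only care required is bookkeeping to confirm that each listed commutator in the theorem is the image of the corresponding commutator in Proposition~\ref{freeproduct4} under the relabeling $1 \leftrightarrow i,\ 2 \leftrightarrow j,\ 3 \leftrightarrow k,\ 4 \leftrightarrow l$. In particular, the set $\overline{B}$ is already written with symbolic indices in the proposition, so it transports verbatim, while $\overline{A_i}, \overline{A_j}, \overline{A_k}, \overline{A_l}$ are the images of $\overline{A_1}, \overline{A_2}, \overline{A_3}, \overline{A_4}$ respectively.
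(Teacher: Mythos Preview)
Your approach is correct and essentially identical to the paper's own proof: both deduce the theorem from Proposition~\ref{freeproduct4} by invoking the permutation symmetry of the generators of $\racg_\sK$ (the paper also remarks that the first two entries of each commutator may be swapped, which is harmless over $\mathbb Z_2$ but not actually needed here). One small slip: in Proposition~\ref{freeproduct4} the set $\overline{B}$ is written with the concrete indices $1,2,3,4$, not symbolic ones, so it too must be relabelled under $1\mapsto i,\ 2\mapsto j,\ 3\mapsto k,\ 4\mapsto l$; this relabelling yields exactly the $\overline{B}$ of the theorem, so the conclusion stands.
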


\begin{proof}
It follows from the proposition~\ref{freeproduct4} and the symmetry of the generators that in the indicated set one can assign different indices to different letters (in particular, $i = 1, j = 2, k = 3, l = 4$), and in each commutator the first two elements can be swapped. From this follows the proof.
\end{proof}

\end{document}